\definecolor{offgreen}{rgb}{0,.60,0}
\theoremstyle{plain}
\newtheorem{theorem}{Theorem}
\newtheorem{corollary}[theorem]{Corollary}
\newtheorem{lemma}[theorem]{Lemma}
\theoremstyle{definition}
\newtheorem{example}[theorem]{Example}
\theoremstyle{remark}
\newcommand{\seqnum}[1]{\href{https://oeis.org/#1}{\underline{#1}}}
\title{Base 3/2 and Greedily Partitioned Sequences}
\author{Tanya Khovanova\\MIT \and Kevin Wu\\Conestoga High School}
\date{}
\begin{document}
\maketitle

\begin{abstract}
We delve into the connection between base $\frac{3}{2}$ and the greedy partition of non-negative integers into 3-free sequences. Specifically, we find a fractal structure on strings written with digits 0, 1, and 2. We use this structure to prove that the even non-negative integers written in base $\frac{3}{2}$ and then interpreted in base 3 form the Stanley cross-sequence, where the Stanley cross-sequence comprises the first terms of the infinitely many sequences that are formed by the greedy partition of non-negative integers into 3-free sequences.
\end{abstract}

\section{Introduction}

Historically the study of fractional bases was begun by R\'{e}nyi \cite{R} in 1957. R\'{e}nyi only used digits less than $\beta$ for base $\beta > 1$, and as a result, the integers need digits after the radix point. 

Another approach was introduced by Akiyama, Frougny, and Sakarovich \cite{AFS} and by Frougny and Klouda \cite{FK}. Here, integers less than $p$ are the digits of numbers in a rational base $\frac{p}{q} > 1$. The advantage of this approach is that integers can be represented as finite strings without using the radix point. 

This base is often called a $q$-$p$ machine and was studied as chip-firing. It was introduced by Propp \cite{JP} and widely popularized by Tanton \cite{JT} as Exploding Dots.

In this paper, we are interested in base $\frac{3}{2}$, using the digits 0, 1, and 2 to represent integers as finite strings.

This paper's other area of interest is the greedy partitioning of non-negative integers into sequences that do not contain arithmetic progressions of length 3. Such sequences are called 3-free sequences.

Odlyzko and Stanley \cite{OS} introduced the lexicographically first 3-free sequence for non-negative integers, called the Stanley sequence. This sequence $S_0$ begins as 0, 1, 3, 4, 9, 10, and so on, and can be described as integers not containing the digit 2 when written in ternary.

Gerver, Propp, and Simpson \cite{GPS} studied a greedy partition of all non-negative integers into 3-free sequences. The first sequence is the Stanley sequence $S_0$. Each next  sequence $S_i$ is constructed to be the lexicographically first 3-free sequence not containing any integers in previous sequences $S_j$, for $ j < i$. There are infinitely many such sequences. 

The first terms of these sequences form a sequence also studied by Borodin et al.~\cite{PSS}, which they called the Stanley cross-sequence. Borodin et al.~\cite{PSS} discovered a connection between base $\frac{3}{2}$ and the greedy partition of integers into sequences. Namely, they conjectured that the even non-negative integers written in base $\frac{3}{2}$ form the Stanley cross-sequence when interpreted in base 3.

Borodin et al.~\cite{PSS} made another conjecture in order to prove the conjecture above. They conjectured that the set of strings in $S_i$, when written in base 3, is the same as the set of strings achieved by adding $2i$ to strings containing 0 and 1 in base $\frac{3}{2}$.

Our main goal is to prove these conjectures, which are are now Theorems~\ref{thm:Scs} and~\ref{thm:main} respectively.

To do this, we arrange strings containing 0, 1, and 2 in a grid, so that the $0$-th row consists of strings containing only digits 0 and 1. The $i$-th row is formed by adding $2i$ to the $0$-th row in base $\frac{3}{2}$. This grid can be supplied with a fractal structure that shows the order of integers in the grid when evaluated in base 3. Using this fractal structure, we prove that the $k$-th row of the grid evaluated in base 3 contains the same numbers as the greedily constructed sequence $S_k$ above. 

Here is a roadmap for the next sections of this paper.

In Section~\ref{sec:fracbases}, we describe fractional bases. Particularly, we discuss the carrying mechanism of adding two to an integer in base $\frac{3}{2}$. In Section~\ref{sec:greedypart}, we describe the greedy partition of non-negative integers into 3-free sequences.
 
In Section~\ref{sec:grid}, we introduce an infinite grid created through addition in base $\frac{3}{2}$ and discuss properties of the infinite grid. 
In Section~\ref{sec:fractalstructure}, we explain the fractal structure that emerges when strings in the grid are interpreted in base 3.
In Section~\ref{sec:proof}, we prove Theorem~\ref{thm:interm} that is the main intermediate step to proving Theorems~\ref{thm:Scs} and~\ref{thm:main} by providing an alternative condition to greedily partitioning.

In Section~\ref{sec:twoproofs} we prove Theorems~\ref{thm:Scs} and~\ref{thm:main}, thus proving the conjectures posited by Borodin et al.~\cite{PSS}.

\section{Fractional bases}\label{sec:fracbases}

Exploding Dots is the framework in which we define the representation of integers in rational bases. 

Consider a rational number $\frac{p}{q}$, where $p > q$. We begin with boxes, placed left to right. We start with an integer $N$ and place $N$ dots in the rightmost box. Whenever a box has $p$ or more dots, those dots are exploded, removed from the box, and replaced with $q$ dots in the box directly to the left. We proceed with these explosions until none can be performed, and each box has less than $p$ dots. Then we write down the number of dots in each box from left to right. The resulting string represents $N$ in base $\frac{p}{q}$.

For example, if $p=10$ and $q=1$, we get the decimal representation of integer $N$, where each box represents a digit place.

We denote the representation of $N$ in base $\frac{p}{q}$ as $(N)_{\frac{p}{q}}$  and the evaluation of string $w$ written in base $\frac{p}{q}$ as $[w]_{\frac{p}{q}}$. For example, $(5)_{\frac{3}{2}} = 22$ and $(5)_3 = 12$, conversely $[22]_{\frac{3}{2}} = 5$ and $[12]_3 = 5$.

Exploding Dots was popularized by James Tanton \cite{JT}. But exploding dots began as a chip-firing procedure suggested by James Propp \cite{JP}.

In the chip-firing world, the procedure described above is called a $q$-$p$ machine.
In this paper, we consider a 2-3 machine, which we often refer to as base $\frac{3}{2}$. 

More formally, we can describe the representation of $N$ in base $\frac{3}{2}$ recursively. If $r$ is the remainder of $N$ modulo 3, then to get $(N)_{\frac{3}{2}}$ we concatenate  $\left(\frac{2(N-r)}{3}\right)_{\frac{3}{2}}$ with $r$.

The first few non-negative integers written in base $\frac{3}{2}$ form sequence A024629:
\[0,\ 1,\ 2,\ 20,\ 21,\ 22,\ 210,\ 211,\ 212,\ 2100,\ 2101,\ 2102,\ 2120,\  \ldots.\]
	
Given the representation of an integer $a_na_{n-1}\ldots a_1a_0$ in base $\frac{3}{2}$, we can recover the integer as 
\[[a_na_{n-1}\ldots a_1a_0]_{\frac{3}{2}} = \sum\limits_{i=0}^n a_i\left(\frac{3}{2}\right)^i.\]

Later in the paper, we have to add 2 to an integer in base $\frac{3}{2}$ many times. Now we describe how to add 2 to an integer.

Consider the rightmost zero in $X$. If there are no zeros, we prepend $X$ with a zero. To add 2, all the digits to the right of the rightmost zero, and including this zero, are reduced by 1 modulo 3, while all the digits before this zero remain unaffected. This is true because each digit will ``carry'' and in effect add a 2 to the next digit up, until the rightmost 0.

For example, $[212021]_{\frac{3}{2}} + 2 = [212210]_{\frac{3}{2}}$.

\section{The greedy partition of integers into 3-free sequences}\label{sec:greedypart}

A sequence is called \textit{3-free} if it does not contain an arithmetic progression of length 3. The \textit{Stanley sequence} $S_0$ is the lexicographically earliest 3-free sequence on the set of non-negative integers.

Sequence $S_0$ begins with 0 and 1, then skips 2 as 0, 1, and 2 form an arithmetic progression. We then add 3 and so on. The sequence is as follows:
\[0,\ 1,\ 3,\ 4,\ 9,\ 10,\ 12,\ 13,\ 27,\ 28,\ 30,\ 31,\ 36,\ 37,\ 39,\ 40,\ \ldots \]
This is sequence \seqnum{A005836} in the OEIS database \cite{OEIS}.

It is widely known that sequence \seqnum{A005836} is the sequence of integers represented without a 2 in base 3. 

Now we describe a greedy partition of non-negative integers into 3-free sequences studied in Gerver at al. \cite{GPS}. 

We start with the Stanley sequence denoted as $S_0$. The next sequence $S_1$ is constructed as the lexicographically earliest 3-free sequence on the non-negative integers not used in $S_0$. More generally, $S_n$ is constructed to be the lexicographically earliest 3-free sequence on the non-negative integers not used in any sequence $S_i$, where $i<n$.

For example, $S_1$ has to start with 2 and 5, as these are the smallest numbers not present in $S_0$. Next, we can add 6 to it, but we cannot add 7, as 5, 6, and 7 form a 3-term arithmetic progression. Neither can we add 8, as 2, 5, and 8 form an arithmetic progression. The next number must be 11. This is sequence \seqnum{A323398} in the OEIS \cite{OEIS}:
\[2,\ 5,\ 6,\ 11,\ 14,\ 15,\ 18,\ 29,\ 32,\ 33,\ 38,\ 41,\ 42,\ 45,\ 54,\ 83,\ \ldots.\]

From Borodin et al.~\cite{PSS}, \seqnum{A323398} written in base 3 are integers that contain exactly one 2 that might be followed by zeros.

It is known that the density of a 3-free sequence is 0, see \cite{Roth}. That means that the procedure described above is infinite, that is, sequences $S_n$ exist for any $n$. 

Consider the  sequence of integers that is formed by the first terms of $S_i$:
\[0,\ 2,\ 7,\ 21,\ 23,\ 64,\ 69,\ 71,\ 193,\ 207,\ \ldots\]

This sequence was named in \cite{PSS} as the \textit{Stanley cross-sequence}, and is sequence \seqnum{A265316} in the OEIS database \cite{OEIS}.

The following theorem was stated as a conjecture in \cite{PSS}.

\begin{theorem}\label{thm:Scs}
The Stanley cross-sequence written in base 3 is the same as the sequence of even non-negative integers written in base $\frac{3}{2}$.
\end{theorem}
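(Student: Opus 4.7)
The plan is to derive Theorem~\ref{thm:Scs} from the stronger Theorem~\ref{thm:main}, which asserts that the base-$3$ representations of the members of $S_i$ are exactly the strings in the $i$-th row of the grid. Granted Theorem~\ref{thm:main}, the $i$-th term of the Stanley cross-sequence---which is by definition $\min S_i$---can be identified by finding the string of minimum base-$3$ value in row $i$. The whole reduction therefore hinges on one claim: the leftmost string of row $i$, namely $(2i)_{\frac{3}{2}}$, has the smallest base-$3$ interpretation among all strings in that row.

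To establish this minimality claim, I would write an arbitrary row-$i$ string as $w = (2i)_{\frac{3}{2}} +_{\frac{3}{2}} v$, where $v$ is a nonzero string with digits in $\{0,1\}$, and argue $[w]_3 > [(2i)_{\frac{3}{2}}]_3$. The argument splits on whether the base-$\frac{3}{2}$ addition increases the length of the representation. If the length grows, then using the observation (an easy consequence of the recursive description of $(N)_{\frac{3}{2}}$ given in Section~\ref{sec:fracbases}) that every integer $\geq 3$ has leading digit $2$ in base $\frac{3}{2}$---since the recursion $N\mapsto \frac{2(N-r)}{3}$ cannot bottom out at $1$---the new leading $2$ sits at a strictly higher place value than any digit of $(2i)_{\frac{3}{2}}$, which forces a strictly larger base-$3$ value. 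The equal-length case is more delicate, and is where I expect to lean on the fractal structure developed in Section~\ref{sec:fractalstructure}, which controls precisely how the strings in a single row are ordered when read in base $3$; with that structure in hand, column $0$ of each row should be transparently the base-$3$ minimum of that row.

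The main obstacle is the equal-length subcase, because base-$\frac{3}{2}$ addition is not monotone with respect to base-$3$ lexicographic comparison, so the minimality of the leftmost column is not visible from the carry mechanism alone. I expect the cleanest resolution is to extract a corollary of the fractal-structure proof of Theorem~\ref{thm:main}: once the rows are described recursively via a self-similar block decomposition, it becomes immediate that within each row the base-$3$ minimum is attained at the leftmost position. An alternative route, which I would pursue if the direct corollary is not clean, is an induction on $i$ using the partition property: Theorem~\ref{thm:main} implies the rows partition all non-negative integers under base-$3$ interpretation, and I would show that every integer strictly below $[(2i)_{\frac{3}{2}}]_3$ is covered by one of rows $0, 1, \dots, i-1$, so that row $i$ begins exactly at $[(2i)_{\frac{3}{2}}]_3$. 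With the minimality claim in place by either route, the theorem follows at once: the $i$-th term of the Stanley cross-sequence equals $[(2i)_{\frac{3}{2}}]_3$, which is precisely the base-$\frac{3}{2}$ representation of the $i$-th even non-negative integer interpreted in base $3$.
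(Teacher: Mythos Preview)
Your overall strategy matches the paper exactly: reduce Theorem~\ref{thm:Scs} to Theorem~\ref{thm:main} together with the claim that column~$0$ carries the base-$3$ minimum of each row. The paper isolates precisely this minimality claim as Lemma~\ref{lemma:zerocolumn} and proves it by an induction on the row index using the zoom-out structure of Section~\ref{sec:fractalstructure} (your ``cleanest resolution'' suggestion), simultaneously establishing $[G(j,0)]_3 < [G(i,0)]_3$ for $j<i$; the final proof of Theorem~\ref{thm:Scs} is then the two-line deduction you describe.

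One small correction to your length-growing subcase: the observation that integers $\geq 3$ have leading digit $2$ in base~$\frac{3}{2}$ does not apply here, because for $j>0$ the string $G(i,j)$ is generally not the base-$\frac{3}{2}$ representation of an integer (e.g.\ $G(1,2)=12$ has leading digit $1$). The conclusion you want still holds for the trivial reason that any strictly longer string with nonzero leading digit dominates in base~$3$, but your case split also omits the possibility that $G(i,j)$ is \emph{shorter} than $G(i,0)$, which is not obviously excluded. These wrinkles are why the paper bypasses the length dichotomy entirely and argues Lemma~\ref{lemma:zerocolumn} directly from the recursive halfZ structure.
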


Proving this theorem is one of the main goals of this paper. This is done in Section~\ref{sec:twoproofs}.

\section{Infinite grid}\label{sec:grid}

We introduce an infinite grid of strings, consisting of the digits 0, 1, and 2. 
The grid is created as follows. We place strings consisting only of digits 0 and 1 in the 0-th row in the natural order: that is in increasing order by value were these strings treated as integers in any integer base. Note however, that in base $\frac{3}{2}$, these strings will not all be integers or in increasing order. Every string in subsequent rows is generated by adding 2 in base $\frac{3}{2}$ to the string expressed in base $\frac{3}{2}$ directly above it.

Here we list some properties of the strings in the grid which are
proven in \cite{PSS}:

\begin{itemize}
    \item No string appears more than once in the grid.
    \item Every finite string not starting with zero, and containing digits 0, 1, and 2 appears in the grid.
    \item Each row evaluated in base $\frac{3}{2}$ is a 3-free set of numbers.
\end{itemize}

In \cite{PSS}, it was conjectured that each row $i$ interpreted in base 3 consists of numbers from greedily constructed sequence $S_{i}$ in a different order. 

Here is an upper left corner of grid $G$.

\[
\begin{matrix}
    0  & 1 & 10 & 11 & 100 & 101 & \dots \\
    2 & 20 & 12 & 200 & 102 & 120 & \dots \\
    21 & 22 & 201 & 202 & 121 & 122 & \dots \\
    210 & 211 & 220 & 221 & 2010 & 2011 & \dots \\
    \vdots
\end{matrix}
\]

This grid $G$ is the main object of our study.

Consider coordinates $(i,j)$ in the grid $G$, where $i$ is the row number and $j$ is the column number, starting from zero. We denote an entry with coordinate $(i,j)$ as $G(i,j)$. By our construction 
\[[G(i+1,j)]_{\frac{3}{2}} = [G(i,j)]_{\frac{3}{2}} + 2.\]

The concatenation of two strings $Y$ and $X$ is denoted by the following overline notation: $\overline{YX}$.

\begin{lemma}[Prefix Property]
Any string in row $i$ prepended with any string containing zeros and ones exists as another string in the same row.
\label{lemma:prefix}
\end{lemma}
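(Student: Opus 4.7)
My plan is to induct on the row index $i$. The base case $i = 0$ is immediate: by construction row $0$ is the set of all $\{0,1\}$-strings in natural order, so prepending any $\{0,1\}$-string yields another $\{0,1\}$-string, still in row $0$.

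For the inductive step, I would lean on the carrying rule for adding $2$ in base $\frac{3}{2}$ recalled in Section~\ref{sec:fracbases}: one locates the rightmost $0$ (inserting a leading $0$ if none exists) and decrements that digit together with all digits to its right by $1$ modulo $3$, while the prefix is left untouched. Given $s$ in row $i$, write $s = s' + 2$ with $s'$ the unique predecessor in row $i-1$; it then suffices to exhibit an element $w$ of row $i-1$ with $w + 2 = \overline{t s}$, since this forces $\overline{ts}$ to lie in row $i$. In the easy case where $s'$ contains a $0$, the rightmost $0$ of $\overline{t s'}$ sits inside the $s'$-portion, so $+2$ acts only on that portion and produces $\overline{t s}$. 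By the inductive hypothesis $\overline{t s'}$ already lies in row $i-1$, and we are done.

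I expect the main obstacle to be the remaining case, in which $s'$ is a string of $1$s and $2$s with no $0$ at all. Then $s = s' + 2 = \overline{2\,d(s')}$, where $d(s')$ denotes $s'$ with each digit decremented modulo $3$; the naive candidate $\overline{t s'}$ fails, because $+2$ applied to it would insert a fresh leading $0$ and shift the whole string, so the resulting prefix no longer agrees with $t$. The remedy is to absorb this extra $0$ into the prepended string itself: apply the inductive hypothesis with the slightly longer prefix $\overline{t 0}$, which is still a $\{0,1\}$-string, to place $\overline{t 0 s'}$ in row $i-1$. Now the rightmost $0$ of $\overline{t 0 s'}$ is precisely the inserted boundary $0$ between $t$ and $s'$ (since $s'$ has no $0$), so $+2$ converts it to a $2$ and decrements $s'$ to $d(s')$, yielding $\overline{t\,2\,d(s')} = \overline{t s}$. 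This closes the induction and proves the Prefix Property.
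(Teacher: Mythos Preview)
Your proof is correct. The induction on $i$ works, and the key maneuver in the ``no zero'' case---absorbing the extra leading $0$ produced by the carry into the prepended prefix by applying the inductive hypothesis with $\overline{t0}$ rather than $t$---is exactly what is needed to make the carrying rule line up.

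However, the paper's own proof is genuinely different and worth comparing. Rather than inducting on $i$ and tracking the carry mechanics, the paper argues directly from the positional formula $[\overline{YX}]_{3/2} = [Y]_{3/2}\cdot(3/2)^{\ell} + [X]_{3/2}$ for $X$ of length $\ell$. One pads $G(0,j)$ with leading zeros to match the length $\ell$ of $G(i,j)$, obtaining $G'(0,j)$; then since $[G(i,j)]_{3/2} - [G'(0,j)]_{3/2} = 2i$ and both strings have length $\ell$, prepending the same $Y$ to each adds the same quantity $[Y]_{3/2}\cdot(3/2)^{\ell}$ to both sides, so $[\overline{YG(i,j)}]_{3/2} - [\overline{YG'(0,j)}]_{3/2} = 2i$ as well. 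Since $\overline{YG'(0,j)}$ is a $\{0,1\}$-string and hence in row $0$, the string $\overline{YG(i,j)}$ is in row $i$.

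The trade-off: the paper's argument is a one-shot algebraic identity with no case split and no induction, and it immediately handles the full jump of $2i$ rather than one step of $+2$ at a time. Your argument, by contrast, stays entirely at the level of the digit-manipulation rule for $+2$ and never invokes the value formula; this is more hands-on and makes the mechanism transparent, at the cost of the extra case analysis and the small trick with the padded prefix.
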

\begin{proof}
Take string $G(i,j)$ of length $\ell$, which we prepend with string $Y$ consisting of zeros and ones to get a string $\overline{YG(i,j)}$. We prove that $\overline{YG(i,j)}$ is in row $i$.

Prepend $G(0,j)$ with zeros such that $G(0,j)$ has length $\ell$, and call this $G'(0,j)$. As $\overline{YG'(0,j)}$ consists of zeros and ones, $\overline{YG'(0,j)}$ is in row $0$. By definition 
\[[G(i,j)]_{\frac{3}{2}} = [G(0,j)]_{\frac{3}{2}}+2i = [G'(0,j)]_{\frac{3}{2}}+2i.\]
We may prepend both $G(i,j)$ and $G'(0,j)$ with $Y$ in the previous equality, as both are length $\ell$. Thus, we have $[\overline{YG(i,j)}]_{\frac{3}{2}} = [\overline{YG'(0,j)}]_{\frac{3}{2}} + 2i$, so $\overline{YG(i,j)}$ is in row $i$.
\end{proof}

For example, if a row contains 200, that row will also contain strings 1200, 10200, 11200, etc.

The \textit{main suffix} of the string $G(i,j)$ is $G(i,j)$ with the initial string of zeros and ones chopped off. The main suffix is either empty or begins with $2$.

\begin{lemma}[Suffix Property]
Two strings with the same main suffix belong to the same row. 
\end{lemma}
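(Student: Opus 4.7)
The plan is to reduce the Suffix Property to the Prefix Property just proven, combined with two structural facts listed for the grid: every finite string not starting with 0 appears in the grid, and no string appears more than once. Fix two entries $G(i,j)$ and $G(i',j')$ sharing a common main suffix $Z$. By the definition of main suffix, I may write $G(i,j) = \overline{YZ}$ and $G(i',j') = \overline{Y'Z}$, where $Y$ and $Y'$ are (possibly empty) strings of 0s and 1s, and $Z$ is either empty or begins with $2$.

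I would then split on whether $Z$ is empty. If $Z$ is empty, both entries consist entirely of 0s and 1s, so by the construction of row 0 both lie in row 0, giving $i = i' = 0$. If $Z$ is nonempty, it begins with $2$ and in particular does not start with $0$, so by the ``every string appears'' property it occurs as some entry $G(k,m) = Z$. Applying the Prefix Property to $G(k,m)$, prepending by $Y$ places $\overline{YZ} = G(i,j)$ in row $k$, and prepending by $Y'$ places $G(i',j')$ in row $k$. Uniqueness of entries then forces $i = i' = k$, which is exactly the claim.

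I do not anticipate a real obstacle: the Prefix Property is doing all the work, and the argument just routes both entries through the canonical bare occurrence of $Z$ as a standalone entry in the grid. The only small point to get right is the edge case where $Z$ is empty, which is not directly covered by the Prefix Property (the empty string is not a grid entry since strings cannot start with $0$ in the grid's enumeration), but this case is disposed of immediately by the definition of row $0$.
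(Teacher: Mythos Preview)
Your argument is correct and follows essentially the same route as the paper: locate the bare main suffix as an entry of the grid and apply the Prefix Property to push both strings into that row. You are a bit more careful than the paper in separating out the empty-suffix case and in invoking uniqueness explicitly, but the underlying idea is identical.
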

\begin{proof}
Consider two strings $\overline{YX}$ and $\overline{WX}$ with the same main suffix $X$. As any string appears in the grid, the string $X$ has to be in some row $i$. From the prefix property, $\overline{YX}$ and $\overline{WX}$ are both in row $i$.
\end{proof}

\section{Fractal structure}\label{sec:fractalstructure}

Now we take grid $G$ and add some directional lines between the entries in a recursive manner, creating a fractal structure. Grid $G$ is the zeroth step, so sometimes we call it  $G_0$.

\subsection{The first iteration}

In the first iteration, we connect some strings of digits in this grid using black color, $c_0$.

We connect two strings $x$ and $y$ with a directed segment of color $c_0$ if they satisfy the following condition: Their digits are the same except for the last digit and $y = x+1$. We show the top left portion of the result in Figure~\ref{fig:g1} in black color. We omit the arrows for directed segments to not clutter the picture. We denote this grid together with the directed segments as $G_1$.

\begin{figure}[ht]
    \centering
\begin{tikzpicture} [x = .5cm, y = .5cm]
    \foreach \x in {0,1,...,15} {
        \foreach \y in {0,1,...,17} {
            \fill[color=black] (\x,\y) circle (0.1);
            
        }
    }
    \foreach \x in {0,1,...,7} {
        \foreach \y in {0,1,...,5} {
            \draw (2*\x,3*\y+2) -- (2*\x+1,3*\y+2);
            \draw (2*\x,3*\y) -- (2*\x+1,3*\y);
            \draw (2*\x,3*\y) -- (2*\x+1,3*\y+1);
            \draw (2*\x,3*\y+1) -- (2*\x+1,3*\y+2);
        }
    }
\end{tikzpicture}
    \caption{$G_1$.}
    \label{fig:g1}
\end{figure}

As we see in the picture, the black connected figures form two shapes that we call \textit{upperZ} and \textit{lowerZ}, because an upperZ and a lowerZ right below it together look like a Z. When it is unspecified whether a connected figure is an upperZ or a lowerZ, we refer to it simply as a \textit{halfZ}. 

The initial sequence of digits that two strings share is called their \textit{longest common prefix}.
There are 3 strings making up each halfZ. As we mentioned before, all the strings in the same halfZ have the same digits except the last digit. If three strings are in the same halfZ, their longest common prefix is simply one of them with the last digit removed. We refer to this longest common prefix as \textit{lcpHZ}.

In the upperZ, we have a segment starting from the upper left corner to the upper right corner, and then to the bottom left corner, forming an acute angle at the top right corner of the shape. The last digits of the strings progress from 0 to 1 to 2, respectively. 
Similarly, in the lowerZ we have a segment starting from the upper right corner to the bottom left corner, and then to the bottom right corner, forming an acute angle in the bottom left corner of the shape. The last digits of the strings progress from 0 to 1 to 2, respectively as well.

The following lemma describes the positions of upperZs and lowerZs.

\begin{lemma}
The upperZ corresponds to a path from $(3a,2b)$ to $(3a,2b+1)$ to $(3a+1,2b+1)$. The lowerZ corresponds to a path from $(3a+1,2b+1)$ to $(3a+2,2b)$ to $(3a+2,2b+1)$. Every entry in the grid belongs to exactly one halfZ.
\end{lemma}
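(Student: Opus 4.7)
The plan is to reduce everything to a clean formula for the last digit of $G(i,j)$ in terms of $(i \bmod 3, j \bmod 2)$. Once that formula is available, the halfZ positions follow from short computations of base-$\frac{3}{2}$ values, and the ``exactly one halfZ'' part is essentially bookkeeping.

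First I will prove by induction on $i$ that the last digit of $G(i,j)$ equals $(2i + (j \bmod 2)) \bmod 3$. The base case is immediate since $G(0,j)$ is the binary expansion of $j$ and so ends in $j \bmod 2$. For the inductive step, using the addition rule from Section~\ref{sec:fracbases}, passing from row $i$ to row $i+1$ decrements modulo $3$ every digit from the rightmost zero onward, and the last digit is always in this range (either it is itself the rightmost zero, or the rightmost zero lies strictly to its left). Thus the last digit decreases by $1$ modulo $3$ per row, matching the formula since $2(i+1) \equiv 2i - 1 \pmod{3}$. I expect this induction to be the main technical step, as it hinges on confirming that the carry always reaches the last digit; it requires a small case distinction for strings whose last digit is $0$, where the rightmost zero coincides with the last digit rather than lying further to the left.

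With the last-digit formula in hand, consider a halfZ with strings $x_0, x_1, x_2$ ending in $0, 1, 2$. The defining relation $[G(i+1,j)]_{3/2} = [G(i,j)]_{3/2} + 2$ says $x_2 = x_0 + 2$ sits directly below $x_0$: if $x_0$ is at $(i_0, j_0)$ then $x_2$ is at $(i_0+1, j_0)$. Applying the formula to $x_0$ (which ends in $0$) forces $(i_0 \bmod 3, j_0 \bmod 2) \in \{(0,0), (1,1)\}$, giving the two families $(i_0, j_0) = (3a, 2b)$ and $(i_0, j_0) = (3a+1, 2b+1)$. To locate $x_1$ I use the identity $[G(0, 2b+1)]_{3/2} = [G(0, 2b)]_{3/2} + 1$, which holds because $G(0, 2b+1)$ is $G(0, 2b)$ with a trailing $0$ replaced by $1$: in the first family this gives $[G(3a, 2b+1)]_{3/2} = [x_0]_{3/2} + 1 = [x_1]_{3/2}$, placing $x_1$ at $(3a, 2b+1)$ and producing an upperZ, while in the second family a parallel one-line computation gives $[G(3a+2, 2b)]_{3/2} = [x_1]_{3/2}$, placing $x_1$ at $(3a+2, 2b)$ and producing a lowerZ.

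Finally, every grid entry $G(i,j)$ belongs to the halfZ whose lcpHZ is $G(i,j)$ with its last digit deleted (the empty lcpHZ accounting for $0, 1, 2$). Uniqueness follows because every finite ternary string appears in the grid exactly once, as noted in Section~\ref{sec:grid}. Combined with the positional analysis above, this shows every entry belongs to exactly one halfZ of the stated shape.
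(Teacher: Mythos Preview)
Your proof is correct and takes a genuinely different route from the paper's. The paper verifies the halfZ shapes for $a=0$ by direct inspection of the first three rows, then extends to all $a$ by vertical periodicity: since three successive add-$2$'s return the last digit to its original value and (after a short case analysis the paper leaves implicit) apply exactly two add-$2$'s to the common prefix, the halfZ pattern repeats with period $3$. You instead derive the closed formula $\text{(last digit of }G(i,j)) \equiv 2i + (j \bmod 2) \pmod 3$ valid for all $(i,j)$ at once, and then pin down the three positions of each halfZ from base-$\tfrac{3}{2}$ value identities. Your route gives a reusable formula that anticipates Corollary~\ref{cor:zoomvalue}; the paper's periodicity argument is more geometric and keeps everything at the level of the base cases.

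One small point to tighten: when you write ``$[G(3a,2b+1)]_{3/2}=[x_1]_{3/2}$, placing $x_1$ at $(3a,2b+1)$'', you are using that $[\,\cdot\,]_{3/2}$ is injective on ternary strings. This is true and easy (clear denominators and reduce modulo $3$ to peel off digits one at a time), but it is worth stating, since equality of base-$\tfrac{3}{2}$ values does not \emph{a priori} force equality of strings. The paper's proof also leans on this implicitly when it passes from value differences to prefix-sharing for the lowerZ, so you are in good company; just add the one-line justification. Your partition argument via the lcpHZ is clean and slightly slicker than the paper's, which infers the partition from the explicit tiling of the first three rows plus periodicity.
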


\begin{proof}
We know that $[G(i+1,j)]_\frac{3}{2} = [G(i,j)]_\frac{3}{2} + 2$.

The last digit of the 0-th row alternates between 0 and 1, therefore, the last digit of $G(0,2b)$ is 0 and the last digit of $G(0,2b+1)$ is 1 and both strings $G(0,2b)$ and $G(0,2b+1)$ have the same digits except for the last one. There is a directed segment from $(0,2b)$ to $(0,2b+1)$, as they are part of the same halfZ. 

As $G(1,2b) = G(0,2b) + 2$ and $G(0,2b)$ ends in 0, the only digit that will change is the last digit, from 0 to 2. Thus there will be a directed segment from $(0,2b+1)$ to $(1,2b)$, completing the halfZ. The lemma is true for the upperZ corresponding to $a=0$.

Next, consider $G(1,2b+1)$ and $G(2,2b)$. As $[G(1,2b+1)]_{\frac{3}{2}} = [G(0,2b+1)]_{\frac{3}{2}} + 2 = [G(0,2b)]_{\frac{3}{2}} +3$, we know $G(1,2b+1)$ ends in 0. Also,  $[G(2,2b)]_{\frac{3}{2}} = [G(1,2b)]_{\frac{3}{2}} + 2 = [G(0,2b)]_{\frac{3}{2}} + 4$. In addition, $[G(2,2b+1)]_{\frac{3}{2}} = [G(0,2b+1)]_{\frac{3}{2}} + 4 = [G(0,2b)]_{\frac{3}{2}} + 5$.

This means $[G(2,2b)]_{\frac{3}{2}} = [G(1,2b+1)]_{\frac{3}{2}}+1$, and $[G(2,2b+1)]_{\frac{3}{2}} = [G(1,2b+1)]_{\frac{3}{2}}+2$. As $G(1,2b+1)$ ends in 0, the directed segments must go from $(1,2b+1)$ to $(2,2b)$ to $(2,2b+1)$, forming a lowerZ. The lemma is true for lowerZ corresponding to $a=0$.

Finally, observe that $[G(i+3,j)]_{\frac{3}{2}} = [G(i,j)]_{\frac{3}{2}}+6$. That means $G(i+3,j)$ and $([G(i,j)]_{\frac{3}{2}}+6)_{\frac{3}{2}}$ have the same last digit. If $G(i,j)$ and $G(n,m)$ share all the digits but the last one, then $G(i+3,j)$ and $G(n+3,m)$ share all the digits but the last one.Thus, the shapes for halfZs are vertically periodic with period 3. As the first three rows are partitioned into halfZs, by periodicity the whole grid is partitioned as well.
\end{proof}

The grid together with black halfZs is $G_1$. 

\subsection{Other iterations}

For the next step of our iteration, we choose another color $c_1$, which is red in Figure~\ref{fig:g4}. Now we connect the halfZs using the same procedure as before on the halfZs, where each halfZ is represented by its respective lcpHZ. This time we connect halfZs such that the corresponding entries differ in the last digit of lcpHZs only. The new grid with new connections is denoted $G_2$. 

We continue this procedure with color $c_m$, which connects halfZs of color $c_{m-1}$ using the same procedure as before. In each iteration of the process, the three strings corresponding to the same halfZ are replaced with one string, namely lcpHZ. The new strings are connected by the same procedure depending on the last digit. 
Note that the lcpHZ for a halfZ of color $c_m$ will have $m$ fewer digits than the lcpHZ for a halfZ of color $c_0$.

The resulting picture is denoted $G_{m+1}$. In Figure~\ref{fig:g4} we show the top left corner of $G_4$, where we use black as $c_0$, red as $c_1$, green as $c_2$, and orange as $c_3$.

\begin{figure}[h]
    \centering
\begin{tikzpicture} [x = .5cm, y = .5cm]
    \foreach \x in {0,1,...,15} 
    {
        \foreach \y in {0,1,...,17} 
        {
            \fill[color=black] (\x,\y) circle (0.1);
            
        }
    }
    \foreach \x in {0,1,...,7} 
    {
        \foreach \y in {0,1,...,5} 
        {
            \draw (2*\x,3*\y+2) -- (2*\x+1,3*\y+2);
            \draw (2*\x,3*\y) -- (2*\x+1,3*\y);
            \draw (2*\x,3*\y) -- (2*\x+1,3*\y+1);
            \draw (2*\x,3*\y+1) -- (2*\x+1,3*\y+2);
            \draw (2*\x,3*\y) -- (2*\x+1,3*\y+1);
        }
    }
    \foreach \x in {0,1,...,3}
    {
        \draw [line width = .03 cm] [red](4*\x+.5,16.5) -- (4*\x+2.5,16.5);
        \draw [line width = .03 cm][red](4*\x+.5,15.5) -- (4*\x+2.5,16.5);
        \draw [line width = .03 cm][red](4*\x+.5,13.5) -- (4*\x+2.5,15.5);
        \draw [line width = .03 cm][red] (4*\x + .5,13.5) -- (4*\x+2.5,13.5);
        \draw [line width = .03 cm][red](4*\x+.5,12.5) -- (4*\x+2.5,12.5);
        \draw [line width = .03 cm][red](4*\x+.5,10.5) -- (4*\x+2.5,12.5);
        \draw [line width = .03 cm][red](4*\x+.5,9.5) -- (4*\x+2.5,10.5);
        \draw [line width = .03 cm][red] (4*\x + .5,9.5) -- (4*\x+2.5,9.5);
        \draw [line width = .03 cm][red](4*\x+.5,7.5) -- (4*\x+2.5,7.5);
        \draw [line width = .03 cm][red](4*\x+.5,6.5) -- (4*\x+2.5,7.5);
        \draw [line width = .03 cm][red] (4*\x + .5,4.5) -- (4*\x+2.5,4.5);
        \draw [line width = .03 cm][red](4*\x+.5,4.5) -- (4*\x+2.5,6.5);
        \draw [line width = .03 cm][red](4*\x+.5,3.5) -- (4*\x+2.5,3.5);
        \draw [line width = .03 cm][red](4*\x+.5,1.5) -- (4*\x+2.5,3.5);
        \draw [line width = .03 cm][red](4*\x+.5,.5) -- (4*\x+2.5,1.5);
        \draw [line width = .03 cm][red] (4*\x + .5,.5) -- (4*\x+2.5,.5);
    }
    \foreach \x in {0,1}
    {
        \draw [line width = .05 cm][offgreen] (8*\x+1.5,2.5)--(8*\x+5.5,2.5);
        \draw [line width = .05 cm][offgreen] (8*\x+1.5,1)--(8*\x+5.5,2.5);
        \draw [line width = .05 cm][offgreen] (8*\x+1.5,5.5)--(8*\x+5.5,5.5);
        \draw [line width = .05 cm][offgreen] (8*\x+1.5,5.5)--(8*\x+5.5,7);
        \draw [line width = .05 cm][offgreen] (8*\x+1.5,7)--(8*\x+5.5,10);
        \draw [line width = .05 cm][offgreen] (8*\x+1.5,10)--(8*\x+5.5,10);
        \draw [line width = .05 cm][offgreen] (8*\x+1.5,11.5)--(8*\x+5.5,11.5);
        \draw [line width = .05 cm][offgreen] (8*\x+1.5,11.5)--(8*\x+5.5,14.5);
        \draw [line width = .05 cm][offgreen] (8*\x+1.5,14.5)--(8*\x+5.5,16);
        \draw [line width = .05 cm][offgreen] (8*\x+1.5,16)--(8*\x+5.5,16);
    }
    \draw [line width = .06 cm][orange] (3.5,1.75)--(11.5,6.25);
    \draw [line width = .06 cm][orange] (3.5,6.25)--(11.5,6.25);
    \draw [line width = .06 cm][orange] (3.5,8.5)--(11.5,8.5);
    \draw [line width = .06 cm][orange] (3.5,8.5)--(11.5,13);
    \draw [line width = .06 cm][orange] (3.5,13)--(11.5,15.25);
    \draw [line width = .06 cm][orange] (3.5,15.25)--(11.5,15.25);
\end{tikzpicture}
    \caption{$G_4$.}
    \label{fig:g4}
\end{figure}

As one can see in the figure, the red segments form halfZs in the same way as the black segments, just bigger. This similarity is due to the fractal structure of our grid. We want to explicitly prove this, so we define the zooming-out procedure.

In the \textit{zooming-out} procedure, we create a new grid. We replace each halfZ with its lcpHZ. One new row in the new grid corresponds to one row of either upperZs or lowerZs. The numbers corresponding to upperZs are placed in the even rows in the same order from left to right, and the numbers corresponding to lowerZs are placed in the odd rows from left to right.

\begin{lemma}
When zooming-out from $G_0$, we get $G_0$.
\end{lemma}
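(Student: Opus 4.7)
The plan is to match each halfZ's lcpHZ to a specific entry of $G_0$. By the zooming-out procedure, the new grid's entry at $(2a, b)$ is the lcpHZ of the upperZ at block $(a, b)$, and its entry at $(2a+1, b)$ is the lcpHZ of the lowerZ at $(a, b)$; so it suffices to show that these lcpHZs equal $G(2a, b)$ and $G(2a+1, b)$ respectively. By the preceding lemma, the three strings $G(3a, 2b)$, $G(3a, 2b+1)$, $G(3a+1, 2b)$ in the upperZ differ only in their last digit (which is $0$, $1$, $2$ respectively), so the lcpHZ $P_{a,b}$ of the upperZ is $G(3a, 2b)$ with its trailing $0$ removed; likewise the lcpHZ $Q_{a,b}$ of the lowerZ is $G(3a+1, 2b+1)$ with its trailing $0$ removed.

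I would first check that the base-$\frac{3}{2}$ values of $P_{a,b}$ and $G(2a, b)$ agree. Because row $0$ is the natural ordering of $\{0,1\}$-strings, $G(0, 2b) = \overline{G(0, b)\,0}$, and hence $[G(0, 2b)]_{\frac{3}{2}} = \tfrac{3}{2}[G(0, b)]_{\frac{3}{2}}$. Combining this with $[G(3a, 2b)]_{\frac{3}{2}} = [G(0, 2b)]_{\frac{3}{2}} + 6a$ and with $[G(3a, 2b)]_{\frac{3}{2}} = \tfrac{3}{2}[P_{a,b}]_{\frac{3}{2}}$ (from $G(3a, 2b) = \overline{P_{a,b}\,0}$) yields
\[ [P_{a,b}]_{\frac{3}{2}} = [G(0, b)]_{\frac{3}{2}} + 4a = [G(2a, b)]_{\frac{3}{2}}. \]
The parallel computation, starting from $[G(3a+1, 2b+1)]_{\frac{3}{2}} = \tfrac{3}{2}[G(0, b)]_{\frac{3}{2}} + 6a + 3$, gives $[Q_{a,b}]_{\frac{3}{2}} = [G(2a+1, b)]_{\frac{3}{2}}$.

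To upgrade these value equalities to string equalities, I would invoke uniqueness of base-$\frac{3}{2}$ representations among strings with no leading zero: padding the shorter of two such strings with leading zeros produces equal-length strings, and their equality given equal values follows from a one-line mod-$3$ argument on $\sum_i (x_i - y_i)(3/2)^i = 0$ (clear denominators, reduce mod $3$ to force $x_0 = y_0$, then induct on position). Since $P_{a,b}$ and $Q_{a,b}$ inherit their leading digit from the grid entries $G(3a, 2b)$ and $G(3a+1, 2b+1)$, and grid entries other than $G(0,0)$ have no leading zero, and since $G(2a, b)$, $G(2a+1, b)$ are likewise grid entries, we conclude $P_{a,b} = G(2a, b)$ and $Q_{a,b} = G(2a+1, b)$. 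The only degenerate case is the upperZ at $(0, 0)$, whose three strings $0, 1, 2$ have empty common prefix $\varepsilon$; we identify $\varepsilon$ with $G(0, 0) = 0$ via $[\varepsilon]_{\frac{3}{2}} = [0]_{\frac{3}{2}} = 0$. With this identification, the zoomed-out grid has $G(r, b)$ at every position $(r, b)$, and so it coincides with $G_0$. I expect the main obstacle to be handling the $(0, 0)$ boundary case cleanly and confirming that every other $P_{a,b}$ and $Q_{a,b}$ has length at least one so that the no-leading-zero uniqueness actually applies.
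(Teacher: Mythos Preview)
Your argument is correct and closely parallels the paper's, but with a different final step. Both you and the paper compute that the base-$\tfrac32$ value of each lcpHZ matches the base-$\tfrac32$ value of the corresponding entry $G(2a,b)$ or $G(2a+1,b)$: the paper does this incrementally, showing that consecutive lcpHZs down a column differ by $2$ in base~$\tfrac32$ (using $(4)_{3/2}=21$ and $(6)_{3/2}=210$), while you compute each value directly via $G(0,2b)=\overline{G(0,b)\,0}$ and the $+2i$ rule. Where the approaches diverge is in passing from value equality to string equality. The paper argues structurally: since the zoomed-out first row coincides with row~$0$ of $G$ as strings, and since the deterministic ``add~$2$ in base~$\tfrac32$'' string operation governs both grids column by column, the grids agree as grids of strings. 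You instead invoke the injectivity of $w\mapsto[w]_{3/2}$ on $\{0,1,2\}$-strings without leading zeros, which you justify by the clear-denominators mod-$3$ induction. That uniqueness lemma is true and your sketch of it is sound (and it works for non-integer values too, which is needed here since e.g.\ $[10]_{3/2}=\tfrac32$), but it is not stated anywhere in the paper, so your route requires this small extra ingredient. The paper's structural argument sidesteps the need for it at the cost of being slightly less explicit about which entry each lcpHZ lands on; your route has the advantage of directly yielding Corollary~\ref{cor:zoomvalue} as a byproduct rather than as a separate observation.
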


\begin{proof}
Let the \textit{zooming-out} procedure on $G $ create $G'$. The first row of $G$ consists of terms with only 0 and 1 in increasing order evaluated in base 2. The first row of $G'$ is the sequence of lcpHZs for the first row of upperZs in $G_0$, in increasing order base 2. Thus, the first row of $G'$ is identical to the first row of $G$.

Consider an upperZ from $(3a,2b)$ to $(3a,2b+1)$ to $(3a+1,2b)$, where $G(3a,2b)$ ends in zero. The string $G(3a+2,2b)$ which in base $\frac{3}{2}$ equals $[G(3a,2b)]_\frac{3}{2}+ 4$ belongs to the lowerZ below it. As $(4)_\frac{3}{2} = 21$, the value of lcpHZ for the upperZ in base $\frac{3}{2}$ is increased by 2 to become the value of lcpHZ for the lowerZ in base $\frac{3}{2}$. The string $G(3(a+1),2b)$ which equals $[G(3a,2b)]_\frac{3}{2}+ 6$ when evaluated in base $\frac{3}{2}$, belongs to the next upperZ below. As $(6)_\frac{3}{2} = 210$, the value of the lcpHZ for this upperZ is $[21]_{\frac{3}{2}} = 4$ greater than the value of the lcpHZ for the upperZ two rows above. The value of the lcpHZs in base $\frac{3}{2}$ always increases by $2$ with respect to the lcpHZ of the halfZ above. Thus, the lcpHZs follow the original construction of $G_0$, and form $G_0$ themselves.
\end{proof}

\begin{corollary}\label{cor:zoomvalue}
The lcpHZ for the upperZ from $(3a,2b)$ to $(3a,2b+1)$ to $(3a+1,2b)$ is $G(2a,b)$ and the lcpHZ for the lowerZ from $(3a+1,2b+1)$ to $(3a+2,2b)$ to $(3a+2,2b+1)$ is $G(2a+1,b)$.
\end{corollary}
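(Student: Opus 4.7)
The plan is to read the corollary off the previous lemma essentially by index bookkeeping, with no new computation required. First I would make explicit how the halfZs in $G_0$ are parameterized: for each fixed $a \geq 0$, the upperZs with corners $(3a,2b), (3a,2b+1), (3a+1,2b)$ form, as $b$ varies, the $a$-th horizontal strip of upperZs, ordered left to right by $b$. Analogously, the lowerZs with corners $(3a+1,2b+1), (3a+2,2b), (3a+2,2b+1)$ form the $a$-th strip of lowerZs.

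Next I would apply the definition of the zooming-out procedure to translate these indices into coordinates of the zoomed-out grid $G'$. Since the procedure places upperZs (represented by their lcpHZs) into the even rows of $G'$ in left-to-right order, strip $a$ becomes row $2a$ of $G'$ and the $b$-th upperZ in that strip becomes entry $G'(2a,b)$. Thus the lcpHZ of the upperZ at $(3a,2b), (3a,2b+1), (3a+1,2b)$ is $G'(2a,b)$. The same reasoning with lowerZs, which fill odd rows, shows that the lcpHZ of the lowerZ at $(3a+1,2b+1), (3a+2,2b), (3a+2,2b+1)$ is $G'(2a+1,b)$.

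Finally I would invoke the previous lemma: the zooming-out procedure applied to $G_0$ returns $G_0$, that is, $G' = G$. Substituting gives $G'(2a,b) = G(2a,b)$ and $G'(2a+1,b) = G(2a+1,b)$, which is exactly the claim. I do not expect any genuine obstacle: the only point to check carefully is that the natural indexing of halfZs by $(a,b)$ matches the ordering specified in the zooming-out procedure, and this is immediate from the arrangement of upperZs and lowerZs in consecutive triples of rows and pairs of columns.
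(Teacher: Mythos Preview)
Your proposal is correct and matches the paper's approach: the corollary is stated immediately after the zooming-out lemma with no separate proof, so the intended argument is precisely the index bookkeeping you describe---identify which row and column of the zoomed-out grid each halfZ lands in, then invoke $G' = G$.
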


A halfZ in $G_n$ with $x$ halfZs to its left and $y$ halfZs above it, has its lcpHZ equal to $G_{n+1}(x,y)$.

\subsection{Order of numbers interpreted in base 3}

The zeroth row evaluated in base $\frac{3}{2}$ is not in increasing order. The order was studied in \cite{PSS}. But it is true that every row is in the same order as the zeroth row as we add 2 to a row in base $\frac{3}{2}$ to get to the next row.

Note that the zeroth row evaluated in base $3$ is in increasing order. However, the other rows are not in increasing order when evaluated in base 3. As an example, in row $1$, we see 20 appearing to the left of 12, which are 6 and 5 respectively, when evaluated in base 3.

It is of great interest to understand how the strings in grid $G$ are ordered when evaluated in base 3.

\begin{theorem}\label{thm:order}
By traversing our directed lines, each time prioritizing the lowest color, we follow the numbers in increasing order base 3.
\end{theorem}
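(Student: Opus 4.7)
I plan to prove the theorem by induction on the color level $m$ via the following invariant: for any halfZ $H$ of color $c_m$ with lcpHZ $L$ whose value in base 3 is $K$, the $3^{m+1}$ strings of $H$ are exactly $\{\overline{L\sigma} : \sigma\in\{0,1,2\}^{m+1}\}$, their base-3 values fill the consecutive block $[K\cdot 3^{m+1},\,(K+1)\cdot 3^{m+1})$, and the ``prioritize lowest color'' traversal inside $H$ enters at $\overline{L\,00\cdots 0}$, visits these strings in base-3 increasing order, and exits at $\overline{L\,22\cdots 2}$. The theorem follows from this invariant because the $c_m$-halfZs form a nested partition of $G$ that exhausts the grid as $m \to \infty$, so any two grid entries eventually lie in a common halfZ whose internal traversal order is pinned down by the invariant.

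The base case $m=0$ is immediate from the definition of a $c_0$-halfZ: the three strings $\overline{L0}$, $\overline{L1}$, $\overline{L2}$ are joined by $c_0$ edges in that order and carry base-3 values $3K, 3K+1, 3K+2$.

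For the inductive step, Corollary~\ref{cor:zoomvalue} together with the zooming-out procedure gives that a $c_{m+1}$-halfZ $H$ with lcpHZ $L'$ (of base-3 value $K'$) decomposes into three $c_m$-halfZs $H_0, H_1, H_2$ whose lcpHZs are $\overline{L'0}$, $\overline{L'1}$, $\overline{L'2}$, respectively, linked by $c_{m+1}$ edges in that order --- this order mirrors the $c_0$ pattern of last-digit progression $0,1,2$ inherited from the base case applied in the zoomed-out grid. Applying the inductive hypothesis to each $H_i$, the traversal inside $H_i$ sweeps the consecutive block $[(3K'+i)\cdot 3^{m+1},\,(3K'+i+1)\cdot 3^{m+1})$ in base-3 increasing order, entering at $\overline{L'i\,00\cdots 0}$ and exiting at $\overline{L'i\,22\cdots 2}$. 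Concatenating the three blocks for $i=0,1,2$ yields the required block $[K'\cdot 3^{m+2},\,(K'+1)\cdot 3^{m+2})$ traversed in base-3 increasing order.

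The crux is to verify that the traversal actually transitions from $H_i$ to $H_{i+1}$ at exactly the right moment: after reaching $\overline{L'i\,22\cdots 2}$, the ``prioritize lowest color'' rule should select $c_{m+1}$ as the minimum available color (the inductive hypothesis ensures that all edges of colors $c_0, \dots, c_m$ inside $H_i$ have already been consumed), and the $c_{m+1}$ edge must deposit the walk at the entry $\overline{L'(i+1)\,00\cdots 0}$ of $H_{i+1}$. Both facts reduce to the zooming-out construction: the $c_{m+1}$ edges between the $H_i$ are, by definition, the $c_0$ edges between their lcpHZ representatives in $G_{m+1}$, and under the inductive invariant an incoming edge to a sub-halfZ is equivalent to starting that sub-halfZ's internal traversal at its all-zero-suffix endpoint. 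Pinning down this correspondence cleanly (and checking the boundary case of empty lcpHZ for halfZs meeting the top-left corner of $G$) is where the bookkeeping is most delicate, but once that is in place the inductive step closes and the theorem follows.
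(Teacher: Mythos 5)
Your plan is correct and rests on the same idea as the paper's proof, namely the self-similar zooming-out structure: prefixes are ordered by the higher-color edges while last digits increase $0,1,2$ along each halfZ. The paper disposes of this in two sentences, whereas your explicit induction on the color level, with the invariant that a $c_m$-halfZ occupies the consecutive base-3 block $\left[K\cdot 3^{m+1},(K+1)\cdot 3^{m+1}\right)$ and is entered at its all-zero suffix and exited at its all-two suffix, is a rigorous elaboration of the same argument; the transition bookkeeping you flag does close exactly as you describe via Corollary~\ref{cor:zoomvalue}.
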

\begin{proof}
Because of the zooming-out action, we ensure the first digits are always the smallest possible, and the last digits go up in order from our directed line segments. Thus, the numbers will be in numerical order.
\end{proof}

As an example of this action, we describe the beginning of the ordering, where we use Figure~\ref{fig:g4} as visual help.
We begin by following the black upperZ, which contains 0, 1, 2 in this order. The next lowest color, red, instructs us to move to the black upperZ right of the first black upperZ, which contains strings 10, 11, 12 in this order. The last part of the red upperZ instructs us to move to the lowerZ below the first black upperZ, which contains strings 20, 21, 22 in this order. As the red upperZ has been traversed, the green segments show us which red halfZ to be traversed next. The starting point of the next red halfZ is a black halfZ, which we traverse the same way as before. We continue similarly. In each step, the strings are in increasing order if interpreted in base 3.

We built each row in the grid using base $\frac{3}{2}$. Theorem~\ref{thm:order} gives us the ordering of strings in the grid if they are interpreted in base 3. Thus the grid somehow connects these two bases. This connection is discussed in the next section.

Meanwhile, we need the following lemma that will be useful later.

\begin{lemma}\label{lemma:minus1}
For every $G(m,n)$ in the grid, $([G(m,n)]_3-1)_3$  is from row $r \geq m-1$.
\end{lemma}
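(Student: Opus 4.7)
The plan is to leverage Theorem~\ref{thm:order}: since the directed-line traversal visits grid entries in strictly increasing base-$3$ order, the string $([G(m,n)]_3-1)_3$ is precisely the entry immediately preceding $G(m,n)$ in that traversal. So the lemma reduces to bounding the row of this predecessor, and I will proceed by strong induction on the number of digits of $G(m,n)$. Single-digit cases $G\in\{1,2\}$ are checked directly, and the degenerate case $G(0,0)=0$ is vacuous.

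For the inductive step I split on the last digit of $G(m,n)$. When that digit is $1$ or $2$, the predecessor lives in the same black halfZ as $G(m,n)$, one step earlier along the arrows. Reading off coordinates from the halfZ lemma gives four short subcases (positions $2\to1$ and $1\to 0$ within each of an upperZ and a lowerZ); in every one of them the row either stays the same or drops by exactly one, so the bound row $\geq m-1$ is immediate.

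The substantive case is last digit $0$, where $G(m,n)=\overline{w0}$ sits at position $0$ of its halfZ. By Corollary~\ref{cor:zoomvalue} the prefix $w$ is the lcpHZ of that halfZ and equals $G(m',n')$ at zoomed-out coordinates $(m',n')$. A routine base-$3$ identity, $[\overline{w0}]_3-1 = 3([w]_3-1)+2$, rewrites the predecessor as $\overline{v2}$ with $v = ([w]_3-1)_3$. Since appending a digit $2$ to a lcpHZ yields position $2$ of the corresponding halfZ, $\overline{v2}$ will appear as position $2$ of the halfZ whose lcpHZ is $v$ (with a leading zero harmlessly absorbed in the degenerate $v=0$ case). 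Applying the inductive hypothesis to $w$ provides $r'\geq m'-1$ where $r'$ is the zoomed-out row of $v$, and converting back to original coordinates—position $2$ of an upperZ at zoomed-out row $2a''$ lies in original row $3a''+1$, and of a lowerZ at row $2a''+1$ in row $3a''+2$—reduces the claim to a small arithmetic inequality.

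The main obstacle I expect is precisely this final arithmetic, which must be done while juggling two independent parities: the halfZ type of $G(m,n)$ (fixing either $m=3m'/2$ for an upperZ or $m=(3m'-1)/2$ for a lowerZ) and the parity of $r'$ (fixing the analogous formula for the predecessor's row). Of the four combinations, the tight one is an upperZ for $G(m,n)$ paired with odd $r'=m'-1$, which produces a predecessor row of exactly $m-1$; the other three produce $m$ or $m+1$. Verifying these four arithmetic subcases closes the induction and establishes the lemma.
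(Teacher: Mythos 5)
Your proposal is correct and follows essentially the same route as the paper: both arguments dispose of the last-digit $1$ or $2$ case by staying inside the same halfZ, and both handle the trailing-zero case by observing that the predecessor of $\overline{w0}$ is $\overline{v2}$ with $v=([w]_3-1)_3$, then applying induction to the lcpHZ and converting rows through the zooming-out correspondence of Corollary~\ref{cor:zoomvalue}. The only cosmetic differences are that you induct on the total number of digits rather than on the number of trailing zeros, and you spell out the even/odd row arithmetic that the paper leaves implicit.
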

\begin{proof}
In all future cases, $G(p,q)$ refers to the lcpHZ of the halfZ to which $G(m,n)$ belongs.

If $G(m,n)$ ends in $1$ or $2$, $([G(m,n)]_3-1)_3$ belongs to the same halfZ, so $r=m$ or $r = m-1$, thus $r \geq m-1$.

If $G(m,n)$ ends in exactly one zero, then $([G(p,q)]_3-1)_3$ is the lcpHZ of the halfZ to which $([G(m,n)]-1)_3$ belongs. Because $G(m,n)$ ends in exactly one zero, we have $G(p,q)$ does not end in $0$, so $G(p,q)$ and $([G(p,q)]_3-1)_3$ belong to the same halfZ, and $([G(p,q)]_3-1)_3$ belongs to row $p-1$ or $p$. Because $([G(m,n)]_3-1)_3$ ends in 2, zooming out from $([G(p,q)]_3-1)$ and $G(p,q)$ gives us that $([G(m,n)]_3-1)_3$ is from some row $r \geq m-1$.

Now we assume that for any $G(m,n)$ that ends in $k$ zeros, it is true that $([G(m,n)]_3-1)_3$ is from row $r \geq m-1$. We use induction to prove that for any $G(m,n)$ that ends in $k+1$ zeros, it is true that $([G(m,n)]_3-1)_3$ is from row $r \geq m-1$. 

Assume $G(p,q)$ ends in $k$ zeros, then $([G(p,q)]_3-1)_3$ is from row $\geq p-1$ by our induction hypothesis.

We know $[\overline{G(p,q)0}]_3 = [G(p,q)]_3$ and $\overline{([G(p,q)]_3-1)_{3} 2}$= $([G(m,n)]_3-1)_3$. As $([G(p,q)]_3-1)_3$ is from row $\geq p-1$, the zooming-out action gives us that $([G(m,n)]-1)_3$ is from some row $r \geq m-1$. In other words, for $G(m,n)$ ending in $k+1$ zeros, $([G(m,n)]_3-1)_3$ is from some row $r \geq m-1$.
\end{proof}

We also need the following lemma.

\begin{lemma}\label{lemma:zerocolumn}
When a row is evaluated in ternary, the smallest number in that row corresponds to the string in the zeroth column. Additionally, each $[G(j,0)]_3<[G(i,0)]_3$ for $j<i$.
\end{lemma}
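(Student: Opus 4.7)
My plan is to prove both parts of the lemma by strong induction on $i$, using the explicit base-$3$ recurrences that Corollary~\ref{cor:zoomvalue} forces once we record the last digit of each entry in a halfZ. Each upperZ at $(3a, 2b)$ has lcpHZ $G(2a, b)$, and its three entries end in $0, 1, 2$ in the order $(3a, 2b), (3a, 2b+1), (3a+1, 2b)$, yielding $[G(3a, 2b)]_3 = 3[G(2a, b)]_3$, $[G(3a, 2b+1)]_3 = 3[G(2a, b)]_3 + 1$, and $[G(3a+1, 2b)]_3 = 3[G(2a, b)]_3 + 2$. Similarly, each lowerZ at $(3a+1, 2b+1)$ has lcpHZ $G(2a+1, b)$, yielding $[G(3a+1, 2b+1)]_3 = 3[G(2a+1, b)]_3$, $[G(3a+2, 2b)]_3 = 3[G(2a+1, b)]_3 + 1$, and $[G(3a+2, 2b+1)]_3 = 3[G(2a+1, b)]_3 + 2$. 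Restricting to $b = 0$ collapses these to the three key recurrences $[G(3a, 0)]_3 = 3[G(2a, 0)]_3$, $[G(3a+1, 0)]_3 = 3[G(2a, 0)]_3 + 2$, and $[G(3a+2, 0)]_3 = 3[G(2a+1, 0)]_3 + 1$.

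For the monotonicity statement, I would compute the three consecutive differences along a triple $3a, 3a+1, 3a+2, 3a+3$: the first equals $2$ unconditionally, and the other two equal $3([G(2a+1, 0)]_3 - [G(2a, 0)]_3) - 1$ and $3([G(2a+2, 0)]_3 - [G(2a+1, 0)]_3) - 1$, which are each at least $2$ by the monotonicity hypothesis applied to pairs of rows strictly below $i$. For the minimum statement at row $i$, I would split by $i \bmod 3$. When $i = 3a$ or $i = 3a+2$, the row-$i$ entries all have the form $3[G(k, b)]_3 + c$ for a fixed $k \in \{2a, 2a+1\}$ and for $c$ ranging in $\{0, 1\}$ or $\{1, 2\}$ according to column parity, so the inductive minimum for row $k$ at $b = 0$ combined with the smaller available choice of $c$ forces the minimum of row $i$ to sit at column $0$. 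When $i = 3a+1$, the row splits into bottoms of upperZs (value $3[G(2a, b)]_3 + 2$, minimized at $b = 0$ by the IH and equal to $[G(3a+1, 0)]_3$) and tops of lowerZs (value $3[G(2a+1, b)]_3 \geq 3[G(2a+1, 0)]_3$); the monotonicity hypothesis then gives $3[G(2a+1, 0)]_3 \geq 3[G(2a, 0)]_3 + 3 > 3[G(2a, 0)]_3 + 2$, so the minimum again lies at column $0$.

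The only delicate point, and the main obstacle, is the case $i = 1$: the argument for $i = 3a+1$ with $a = 0$ would invoke the inductive hypothesis for row $2a+1 = 1$, the very row under consideration, making the induction circular there. I would resolve this base case by a direct length argument, cataloguing the three length-one strings $0, 1, 2$ as occupying $(0,0), (0,1), (1,0)$ respectively; the only length-one entry of row $1$ is therefore $G(1, 0) = 2$, and every other row-$1$ entry has length at least $2$ with a leading digit at least $1$, so its base-$3$ value is at least $3 > 2$. Combined with the trivial case $i = 0$, where row $0$ already lists $\{0,1\}$-strings in increasing numerical order, this handles the base of the induction, and the strong induction then completes both claims cleanly for $i \geq 2$.
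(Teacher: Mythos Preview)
Your proposal is correct and follows essentially the same approach as the paper's own proof: strong induction on $i$ with base cases $i=0,1$, a three-way split by $i\bmod 3$ driven by the recurrences from Corollary~\ref{cor:zoomvalue}, and the same comparison $3[G(2a,0)]_3+2<3[G(2a+1,0)]_3$ in the $i=3a+1$ case. Your write-up is in fact a bit more careful than the paper's --- you explicitly flag the circularity at $i=1$ and compute the consecutive differences along the zeroth column (even obtaining the sharper bound $\geq 2$), whereas the paper's corresponding inequality $3[G(j,0)]_3+r_1<3[G(j+1,0)]_3+r_2$ leaves the reader to check the digits $r_1,r_2$.
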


\begin{proof}
As the numbers in the grid are non-negative, the lemma holds for the $0$-th row, as $G(0,0) = 0$.
The lemma also holds for row $1$, as $[G(1,0)]_3 = 2$ and $0$ and $1$ both appear in the $0$-th row. We also have that $[G(0,0)]_3 = 0 < [G(1,0)]_3 = 2$.

Induction hypothesis: Assume every row  $j<i$ evaluated in base 3, has its smallest number in the zeroth column, and that each $[G(j,0)]_3<[G(j+1,0)]_3$. We wish to show that row $i$ evaluated in base 3, must also have its smallest number in the zeroth column and that $[G(i,0)]_3<[G(i+1,0)]_3$.

If $i = 3a$ or $3a+2$, then the strings in row $i$ with the last digit truncated will all be from the same row, either $2a$ or $2a+1$ respectively. As the smallest number in row $2a$ is $[G(2a,0)]_3$ and the smallest number in row $2a+1$ is $[G(2a+1,0)]_3$, it follows that the smallest number in row $i$ is $[G(i,0)]_3$.

If $i = 3a+1$, the strings in row $i$ with the last digit truncated are either from row $2a$ or $2a+1$. Strings from the even columns of row $i$ correspond with elements from $2a$ and from odd columns with $2a+1$. Thus the smallest number of $\{[G(i,0)]_3,[G(i,2)]_3,[G(i,4)]_3,...\}$ is $[G(i,0)]_3$ and the smallest number of $\{[G(i,1)]_3,[G(i,3)]_3,[G(i,5)]_3,...\}$ is $[G(i,1)]_3$. We have \[[G(i,0)]_3 = 3[G(2a,0)]_3+2 < 3[G(2a+1,0)]_3 = [G(i+1,0)]_3\] as $[G(2a,0)]_3 < [G(2a+1,0)]_3$ by our induction hypothesis, so the smallest number in row $i$ is $[G(i,0)]_3$.

Now we show that $[G(i,0)]_3<[G(i+1,0)]_3$. If they belong to the same upperZ, then $[G(i,0)]_3 = 3[G(2a,0)]_3 < 3[G(2a,0)]_3+2 = [G(i+1,0)]_3$, and the inequality holds. If they do not belong to the same upperZ, then $[G(i,0)]_3 = 3[G(j,0)]_3+r_1 < 3[G(j+1,0)]_3+r_2 = [G(i+1,0)]_3$, where $r_1$ and $r_2 \in {0,1,2}$.

Thus, $[G(i,0)]_3$ is the smallest number in row $i$ and $[G(i,0)]_3<[G(i+1,0)]_3$ for every $i$.
\end{proof}

\section{Theorem~\ref{thm:interm} and its proof}\label{sec:proof}

Now we examine specific properties of the rows of $G$. The paper \cite{PSS}, that our paper was chiefly inspired by, showed that each row is 3-free. Recall that a set is 3-\textit{free} if it does not contain an arithmetic progression of length 3. 

One of the goals of this paper is to prove the following theorem that was conjectured in \cite{PSS}.

\begin{theorem}\label{thm:main}
The infinite set of numbers, which are the strings in row $i$ interpreted in base 3, consists of the same numbers as sequence $S_i$, the $i$-th sequence of the greedy partition of non-negative integers into 3-free sequences.
\end{theorem}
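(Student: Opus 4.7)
The plan is to deduce Theorem~\ref{thm:main} from Theorem~\ref{thm:interm} of the preceding section, which recharacterizes the greedy partition by a local condition on a candidate partition. Concretely, I would verify that the partition of the non-negative integers induced by reading the rows of $G$ in base~$3$ satisfies the hypotheses of Theorem~\ref{thm:interm}, and then invoke that theorem to conclude that row~$i$ equals $S_i$ for every~$i$.

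The first step is to check that the rows, read in base~$3$, really do partition the non-negative integers. The grid contains every finite ternary string not starting with~$0$ exactly once, and the Prefix Property (Lemma~\ref{lemma:prefix}) absorbs arbitrary leading $0$s into the same row; since a leading~$2$ would change the integer, each non-negative integer is represented in exactly one row. The second step is to verify that each row is $3$-free when interpreted in base~$3$. The third step is to verify the ``forcing'' hypothesis that Theorem~\ref{thm:interm} is expected to demand: for each $n$ in row~$i$ and each $j<i$, the integer $n$ cannot be adjoined to row~$j$ without creating a $3$-term arithmetic progression with two earlier elements of row~$j$. The tools for these steps are Theorem~\ref{thm:order} (the colored traversal gives the base-$3$ ordering on each row), Lemma~\ref{lemma:zerocolumn} (row minima sit in column~$0$ and increase with~$i$, so the first element of row~$i$ matches the first element of $S_i$), Corollary~\ref{cor:zoomvalue} (identifying lcpHZs at one scale with entries at the next), and Lemma~\ref{lemma:minus1} (which constrains which row contains $n-1$).

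The main obstacle is expected to be the combination of the second and third steps. The rows are $3$-free in base~$3/2$ by construction, but base~$3$ arithmetic behaves differently, so the argument must exploit the fractal self-similarity of~$G$: a hypothetical $3$-AP inside row~$i$, or a witness for the forcing condition, should be traced via the zooming-out procedure down to a configuration at a smaller fractal scale, ultimately reducing to the base case in row~$0$ together with the predecessor constraint of Lemma~\ref{lemma:minus1}. Managing the parities of upperZs and lowerZs under this descent, and splitting carefully on whether the relevant strings end in $0$, $1$, or~$2$, is where the arithmetic becomes delicate, and this is where I expect the bulk of the technical work to live.
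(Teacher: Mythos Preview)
You have misread what Theorem~\ref{thm:interm} actually asserts. It is \emph{not} a general criterion of the form ``any partition satisfying such-and-such local condition must equal $\{S_i\}$''; it is the concrete statement that the rows of $G$, read in base~$3$, already satisfy the forcing property (every element of row~$i$ is the third term of a $3$-AP whose first two terms lie in row~$j$, for each $j<i$). Consequently your ``third step'' --- verifying the forcing hypothesis for the rows --- is not a hypothesis to check before invoking Theorem~\ref{thm:interm}; it \emph{is} Theorem~\ref{thm:interm}, and the fractal/zoom-out casework you sketch is exactly the content of Section~\ref{sec:proof}. Likewise, your ``first step'' (the rows partition~$\mathbb{Z}_{\ge 0}$) and the $3$-freeness of each row in base~$3$ are already recorded from~\cite{PSS}. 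So none of your three steps advances beyond what is already on the table when one reaches Theorem~\ref{thm:main}.

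What is genuinely missing from your plan is the short bridging argument that the paper supplies as Lemma~\ref{lemma:RiSi}: once you know the rows form a partition into $3$-free sets with the forcing property, a minimal-counterexample argument shows this partition must coincide with $\{S_i\}$. Namely, if $a$ is the least integer with $a\in S_i\cap R_j$ and $i\neq j$, then below $a$ the two partitions agree; if $i<j$ the forcing property of $R_j$ forces $a$ to complete a $3$-AP inside $S_i$, contradicting $a\in S_i$; if $i>j$ the greedy rule for $S_j$ forces $a$ to complete a $3$-AP inside $S_j\cap[0,a)=R_j\cap[0,a)$, contradicting $3$-freeness of $R_j$. The paper's proof of Theorem~\ref{thm:main} is then a single line: combine Theorem~\ref{thm:interm} with Lemma~\ref{lemma:RiSi}. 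Your proposal never isolates this step and instead treats Theorem~\ref{thm:interm} as if it already delivered the identification with $S_i$, which it does not.
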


We prove this theorem later in Section~\ref{sec:twoproofs}. Instead, we prove a related theorem.

\begin{theorem}\label{thm:interm}
Each row of the grid in base 3 has the property that every term in row $i$ of the grid can be represented as the last term of a 3-term arithmetic progression with the first two elements in row $j$ of the grid, for any $j < i$.
\end{theorem}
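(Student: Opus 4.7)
The plan is strong induction on $i$. For the base case $i=1$, any element $c$ of row $1$ arises from adding $2$ in base $\frac{3}{2}$ to a binary string, and the carrying rule from Section~\ref{sec:fracbases} shows that $c$ has the form $\overline{Y\,2\,0^m}$ for some binary $Y$ and $m\geq 0$; the triple $(\overline{Y\,0\,0^m},\overline{Y\,1\,0^m},\overline{Y\,2\,0^m})$ is then a 3-AP in base $3$ whose first two terms are binary and thus lie in row $0$ by Lemma~\ref{lemma:prefix}.

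For the inductive step, fix $c$ in row $i$ and $j<i$. Let $L$ be the lcpHZ of the halfZ containing $c$, so $[c]_3=3[L]_3+d_c$ with last digit $d_c\in\{0,1,2\}$, and by Corollary~\ref{cor:zoomvalue} the element $L$ lies in some row $\tilde i<i$ of $G$ (the zoomed-out grid being $G$ itself). I look for last digits $d_a,d_b\in\{0,1,2\}$ satisfying $d_a+d_c\equiv 2d_b\pmod 3$ and both admissible in row $j$, where admissibility is determined by $j\bmod 3$ together with the upper- or lower-halfZ type. Any such choice lifts the target equation $a+c=2b$ to $[L_a]_3+[L]_3=2[L_b]_3+k$ in the zoomed-out grid, where $k=(2d_b-d_a-d_c)/3\in\{-1,0,1\}$; then $a=3[L_a]_3+d_a$ and $b=3[L_b]_3+d_b$ automatically land in row $j$ once $L_a,L_b$ are produced.

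When $k=0$, the triple $(L_a,L_b,L)$ is a genuine 3-AP in $G$ with $L$ in row $\tilde i$ and $L_a,L_b$ required in some row $\tilde j$; the induction hypothesis applied to the row of $L$ furnishes such $L_a,L_b$ whenever $\tilde j<\tilde i$. A direct enumeration of the nine combinations of $(d_c,j\bmod 3)$ shows a $k=0$ option exists in every case except one: $d_c=1$ with $j\equiv 1\pmod 3$, because row $3\alpha+1$ admits only last digits $0$ (from lowerZ) and $2$ (from upperZ) and neither makes an AP modulo $3$ with $1$. The few boundary subcases in which $\tilde j=\tilde i$ would be forced (for example $d_c=2$ with $j=i-1$) are instead handled by the degenerate choice $L_a=L_b=L$, which is really just the elementary 3-AP $(c-2,c-1,c)$ sitting inside a single halfZ.

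The main obstacle is the single exceptional case $d_c=1$, $j\equiv 1\pmod 3$, where one must use $k=\pm 1$ and the auxiliary $L_a,L_b$ have to live in the two distinct adjacent rows $2\alpha$ and $2\alpha+1$ of the zoomed-out grid. For this case my plan is first to sharpen Lemma~\ref{lemma:minus1} to pin down more precisely the row of $[L]_3-1$, which settles $j=i-1$ through $(c-2,c-1,c)$ since $c-2=3([L]_3-1)+2$ is then the $d=2$ element of a halfZ whose lcpHZ row is known. For the remaining $j\equiv 1\pmod 3$ with $j<i-1$, I would descend one further level into the fractal structure, invoke the induction hypothesis on the lcpHZ of $L$'s own halfZ, and translate the resulting 3-AP back through two stages of lifting using the Prefix Property. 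The delicate bookkeeping of this exceptional case is the technical heart of the proof and will likely be packaged into a dedicated auxiliary lemma about the joint behavior of the two neighboring rows $2\alpha$ and $2\alpha+1$.
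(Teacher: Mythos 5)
Your overall strategy is the same as the paper's: peel off the last digit via the halfZ/zooming structure, reduce the target relation $a+c=2b$ to a (possibly shifted) relation among the lcpHZs, and induct through the fractal. Your enumeration is also correct: a $k=0$ lift exists for every combination of $d_c$ and $j\bmod 3$ except $d_c=1$ with $j\equiv 1\pmod 3$, and in that exceptional case the two auxiliary lcpHZs are forced into the \emph{different} adjacent rows $2\alpha$ and $2\alpha+1$. That exceptional case, however, is not a loose end to be ``packaged into a dedicated auxiliary lemma'' --- it is the technical heart of the entire theorem (the paper's Lemma~\ref{lemma:3a+1}, Case C), and your proposal stops exactly where the real work begins. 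As written, the proof is incomplete.

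Concretely, two things go wrong with the sketch you give for that case. First, when you ``descend one further level,'' the shifted equation $[d_1]_3-[c_1]_3=[x_1]_3+1-[d_1]_3$ does not always resolve into a clean AP at the next level: depending on $2a\bmod 3$ and the next digit of $x$, you land in one of three structurally different situations. In one of them (the paper's ``Iterative'' case) you reproduce the \emph{same} $+1$-shifted, two-row equation one digit further in, so the descent can repeat indefinitely and you must argue termination (it does terminate because the string is finite and the row index drops, but this needs to be said and the base cases at rows $0$ and $1$ must absorb the recursion). In another (the ``Peculiar'' case) the equation becomes an AP ending at $[x_2]_3-1$ rather than $[x_2]_3$, so you must know which row of the grid the string $([x_2]_3-1)_3$ occupies before you can invoke any induction hypothesis; this is precisely the content of Lemma~\ref{lemma:minus1}, and the lemma as stated (row $\ge m-1$, with the equality subcase handled by taking $c_2=d_2$) already suffices --- your proposed ``sharpening'' to pin down the exact row is both unnecessary and unlikely to exist in clean form, since the row of $n-1$ relative to that of $n$ genuinely varies. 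Second, your fallback $L_a=L_b=L$ (i.e., the triple $(c-2,c-1,c)$) does not cover all the boundary subcases you assign to it: for $c$ in row $3a+2$ ending in $2$, the element $c-1$ lies in row $3a+2$ itself, not in row $i-1$. Until the nine subcases of the exceptional configuration (indexed by $2a\bmod 3$ and the penultimate digit of $x$) are worked out and the Iterative recursion is shown to bottom out, the argument has a hole exactly where the conjecture was hard.
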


Suppose the set of non-negative integers is divided into disjoint sets $R_i$, where $i \geq 0$, such that every term in $R_i$ can be represented as the last term of a 3-term arithmetic progression with the first two elements in $R_j$, for any $j < i$. The theorem claims that each row $i$ contains the same terms as $R_i$.

The proof of this theorem is done in separate lemmas, each for a different row, using induction for the overall case. 

\subsection{Rows 0 and 1}

It is well known that the set of numbers represented without the digit $2$ in base $3$ form the lexicographically first 3-free sequence \cite{OS}. As row $0$ is exactly this set of numbers, row $0$ evaluated in base 3 is equal to $R_0$.

Now we study row 1. The following lemma is proven in \cite{PSS}.

\begin{lemma}
The strings in row 1 can be described as follows. Each string has exactly one digit $2$, followed by any number of the digit $0$ and preceded by an arbitrary sequence created from the digits $0$ and $1$.
\end{lemma}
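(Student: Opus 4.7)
The plan is to apply the carrying rule for adding $2$ in base $\frac{3}{2}$ (described in Section~\ref{sec:fracbases}) directly to a generic row 0 string, read off the resulting shape, and then check that the map from row 0 to the set of strings described in the lemma is a bijection. Since row 1 is by construction the image of row 0 under $X \mapsto X + 2$ in base $\frac{3}{2}$, this will determine row 1 as a set of strings.

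First I would fix $X$ in row 0, a finite string over $\{0,1\}$ without leading zero (allowing $X = 0$ itself), and split into two cases. If $X$ contains at least one $0$, write it uniquely as $X = \overline{Y 0 1^k}$, where the distinguished $0$ is the rightmost zero of $X$, $Y \in \{0,1\}^*$ is the (possibly empty) part strictly to its left, and $k \ge 0$ counts the trailing $1$s. The carry rule reduces the rightmost $0$ together with the $k$ subsequent $1$s each by $1 \pmod 3$, so $X + 2 = \overline{Y 2 0^k}$, which has the claimed shape. If $X$ contains no $0$, then $X = 1^m$ for some $m \ge 1$; the rule prepends a $0$ and reduces every digit, yielding $X + 2 = \overline{2 0^m}$, again of the claimed form with empty prefix.

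For the reverse inclusion, given a string $Z = \overline{Y 2 0^k}$ of the stated form (with $Y \in \{0,1\}^*$ either empty or beginning with $1$, and $k \ge 0$), the plan is to exhibit a row 0 preimage under $+2$: take $X = \overline{Y 0 1^k}$ when $Y$ is nonempty, $X = 1^k$ when $Y$ is empty and $k \ge 1$, and $X = 0$ when $Y$ is empty and $k = 0$. In each case $X$ is a legitimate row 0 string (no leading zero), and the carrying rule confirms $X + 2 = Z$. Combined with the forward direction, this identifies row 1 with exactly the set of strings described in the lemma.

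The only delicate point is reconciling the exceptional prepend-a-zero case (where $X = 1^m$ has no interior zero to carry into) with the empty-prefix case $Y = \emptyset$ of the general form; once that matching is handled, the forward and reverse constructions are manifestly inverse bijections, and the remainder of the argument is routine case analysis on the $+2$ rule.
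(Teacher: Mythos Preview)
Your proof is correct. The paper itself does not prove this lemma; it simply cites \cite{PSS} (Borodin et al.), so there is no in-paper argument to compare against. Your approach---applying the $+2$ carrying rule from Section~\ref{sec:fracbases} to a generic $\{0,1\}$-string and then exhibiting the explicit inverse on the target set---is exactly the natural one, and your case split correctly handles the only nontrivial point, namely matching the ``no-zero'' strings $1^m$ (which trigger the prepend-a-zero rule) with the empty-prefix targets $2\,0^m$. One minor remark: the lemma's phrase ``arbitrary sequence created from the digits $0$ and $1$'' for the prefix should be read with the grid's implicit no-leading-zero convention, which you have made explicit by requiring $Y$ to be empty or to begin with $1$; this is the right reading, since every string in the grid other than $0$ itself has no leading zero.
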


We use the following standard notation to describe a string of digits  \[x = d_1^{e_1} d_2^{e_2} \cdots d_k^{e_k},\] where $d_i$ is a digit $0, 1, $ or $2$, and each exponent $e_i$ describes the length of the run for each $d_i$. For example the string 01002100 is represented as $0^1 1^1 0^2 2^1 1^1 0^2$.

\begin{lemma}
Row 1, evaluated in base 3, contains the same set of integers as $R_1$.
\end{lemma}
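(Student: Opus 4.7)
My plan is to establish the set equality by induction in sorted order. List row 1 as $r_0 < r_1 < r_2 < \cdots$ and $R_1$ greedily as $s_0 < s_1 < \cdots$, and show $r_m = s_m$ for every $m$. The base case $r_0 = s_0 = 2$ holds since $2 = [2]_3$ is both the leftmost row 1 string and the smallest element of $\mathbb{N} \setminus R_0$. For the inductive step, assuming $r_j = s_j$ for all $j \leq m$, the greedy procedure selects as $s_{m+1}$ the smallest integer $n > r_m$ with $n \notin R_0$ that forms no 3-AP with any two elements of row 1 $\cap [0, r_m]$. Since row 1 is 3-free by \cite{PSS}, $r_{m+1}$ is itself a valid candidate. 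Thus it suffices to exclude every $n \in (r_m, r_{m+1})$ with $n \notin R_0$; such an $n$ has a $2$ in its base-$3$ representation yet is not of the form $u 2 0^k$ for $u \in \{0,1\}^*$.

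The inductive step reduces to the following key claim: \emph{every integer $n$ whose base-$3$ representation contains at least one $2$ and is not of the form $u 2 0^k$ with $u \in \{0,1\}^*$ admits a $3$-AP $(a, b, n)$ with $a, b \in$ row $1$ and $a < b < n$.} I would prove this by induction on the length $L$ of $n$'s base-$3$ representation. The base case $L = 2$ is handled directly: $n = 7 = [21]_3$ via $(5, 6, 7)$, and $n = 8 = [22]_3$ via $(2, 5, 8)$.

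For the inductive step, split on the leading digit of $n$. If it is $1$, write $n = 3^{L-1} + n'$; prepending a $1$ preserves both the row 0 form and the row 1 form, so $n' \notin$ row 0 $\cup$ row 1. The hypothesis supplies $a', b' \in$ row 1 with $a' + n' = 2b'$, and then $a = a' + 3^{L-1}$, $b = b' + 3^{L-1}$ again lie in row 1 (prepending a $1$ preserves the row 1 shape) and give the required AP for $n$. If the leading digit is $2$, write $n = 2 \cdot 3^{L-1} + n''$ and subdivide. When $n'' \in$ row 1, the AP $(n'',\, n'' + 3^{L-1},\, n)$ works cleanly, since prepending a $1$ to $n''$ again yields a row 1 string. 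When $n'' \notin$ row 0 $\cup$ row 1, apply the hypothesis to $n''$ to obtain $a''$ and $b''$, and take $a = a''$, $b = b'' + 3^{L-1}$; a short computation verifies $a + n = 2b$ and both are in row 1.

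The remaining sub-case, $n'' \in$ row 0 with $n'' > 0$, is the main obstacle: adjoining a leading $2$ to any row 1 element produces two $2$s, so no uniform translation of the hypothesis works. I would resolve this by an explicit construction in which the unique $2$ of $b$ is placed at a position strictly below $L - 1$, chosen so that the cascading base-$3$ borrows in computing $a = 2b - n$ still preserve the one-$2$-followed-by-zeros form. The choice of that position depends on the pattern of $1$s in $n''$, and the bookkeeping of borrows in this case is where the argument becomes most delicate.
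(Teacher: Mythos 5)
Your overall plan is viable and genuinely different in organization from the paper's: the paper proves the key claim (every string from a row $\ge 2$ is, in base 3, the last term of a 3-AP whose first two terms lie in row 1 and are smaller) in one shot, by decomposing $x=\overline{x_1x_2x_3}$ into trailing zeros $x_3$, a short suffix $x_2$ ending at the rightmost nonzero digit, and the remainder $x_1$, and then writing $a$ and $b$ down block by block; you instead recurse on the length of the base-3 expansion by stripping the leading digit. Your base case $L=2$, your leading-digit-$1$ lift, and the two leading-digit-$2$ sub-cases with $n''\in\text{row }1$ or $n''\notin\text{row }0\cup\text{row }1$ all check out (the arithmetic $a''+n''=2b''\Rightarrow a''+n=2(b''+3^{L-1})$ and the closure of row 1 under prepending $0$s and a $1$ are exactly right, and your strict inequality $b<n$ is what makes the greedy bookkeeping work).

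The genuine gap is the sub-case you defer: $n''\in\text{row }0$, i.e.\ $n=[2w]_3$ with $w\in\{0,1\}^{*}$ not all zeros. This case carries essentially all of the arithmetic content of the lemma, and it cannot be recursed away: even when $n$ contains several $2$s, your recursion bottoms out on a suffix of exactly this form whenever the digits after the last stripped $2$ all lie in $\{0,1\}$. A one-sentence description of where the $2$ of $b$ should go, with the borrows left as "delicate bookkeeping," is a plan rather than a proof --- and the delicacy is real: the paper's own printed formula $a_2=0^{j+k}2^1$ in its Cases 1 and 3 does not satisfy the claimed common difference in general (for $x_2=201$ it yields the non-progression $2,15,19$; the correct middle-block is $a_2=102$, and in general $a_2=1^1 0^{j}1^{k-1}2^1$ in Case 1 and $a_2=0^{j+1}1^{k-1}2^1$ in Case 3). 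To close your gap you would need to exhibit the pair explicitly; for instance, writing $w=u\,0^j\,1^k\,0^m$ with $1^k$ the final run of ones, the empty-$u$ case is handled by $b=[\,1^j\,2\,0^{k+m}\,]_3$ and $a=[\,1\,0^{j}\,1^{k-1}\,2\,0^m\,]_3$, both in row 1 and satisfying $a+n=2b$ with $a<b<n$, and the nonempty-$u$ case requires a similar explicit suffix construction combined with replacing the leading $2$ by $0$ in $a$ and by $1$ in $b$. Until that construction is written down and verified, the proof is incomplete at its crux.
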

\begin{proof}
We need to show that $x$ from row $k>1$ forms an arithmetic sequence with two smaller elements $a$ and $b$ of row $1$ when all of them are evaluated in base 3. Without loss of generality we assume that $[a]_3<[b]_3$. We know that $x$ has at least one digit 2 followed by digits that are not all 0. 

We parse $x$ into three sections, $x_1$, $x_2$, and $x_3$, where we denote the concatenation using the overline $x = \overline{x_1 x_2 x_3}$. We pick $x_3$ consisting of the longest number of digits $0$ at the end of $x$. If there are no zeros, then $x_3$ is empty.

If the rightmost non-zero digit of $x$ is 2, then let $x_2 = 2$. Otherwise $x_2$ ends with 1. We define $x_2$ as either $2^1 0^j 1^k$ where $j \geq 0$ and $k> 0$, or $1^1 0^j 1^k$, where $j, k > 0$. The string $x_1$ is defined to be simply the remaining digits in $x$ to the left.

Now we construct $a = \overline{a_1 a_2 a_3}$ and $b = \overline{b_1 b_2 b_3}$, where the length of $a_i$ equals the length of $b_i$ equals the length of $x_i$ for $i= 1, 2, 3$, and leading zeros are permitted. We will show there are $a$ and $b$ such that $[a_i]_3 \leq [b_i]_3 \leq [x_i]_3$, where $[a_i]_3$, $[b_i]_3$, and $[x_i]_3$ for $i = 1,2,3$ form an arithmetic sequence. This would imply that $[a]_3$, $[b]_3$, and $[x]_3$ form an arithmetic progression.

We define $a_3$ = $b_3$ = $x_3$. Thus $[a_3]_3$, $[b_3]_3$, and $[x_3]_3$ form an arithmetic sequence, as $x_3$ consists only of 0s. 

Next, in digit places where $x_1$ has 0 or 1, we use the same digit as $x_1$ in this digit place for both $a_1$ and $b_1$. For the digit places of $x_1$ that are 2, these digit places are 0 in $a_1$ and 1 in $b_1$. This construction results in an arithmetic sequence $[a_1]_3$, $[b_1]_3$, and $[x_1]_3$. If $x_1$ does not contain a 2, then $a_1 = b_1 = x_1$.
    
Now it remains only to construct $a_2$ and $b_2$. Consider caseworks on the various different possibilities for $x_2$. In each case the difference is $[x_2]_3 - [b_2]_3 = [b_2]_3 - [a_2]_3 = [0^1  1^{j+k}]_3$. 

\textbf{Case 0:} $x_2=2$.

Simply let $a_2 = b_2 = 2$. Clearly $[a_2]_3$, $[b_2]_3$ and $[x_2]_3$ form an arithmetic sequence.

\textbf{Case 1:} $x_2 = 2^1 0^j 1^k$, where $j,k>0$.

Let $a_2 = 0^{j+k} 2^1$, and $b_2 = 1^j 2^1 0^k$. This gives us $[b_2]_3 - [a_2]_3 = [x_2]_3 - [b_2]_3 = [0^1 1^{j+k}]_3$, thus $[a_2]_3$, $[b_2]_3$ and $[x_2]_3$ form an arithmetic sequence.

\textbf{Case 2:} $x_2 = 2^1 1^k$ where $k>0$.

Let $a_2 = 1^k 2^1$ and $b_2 = 2^1 0^k$. This gives us $[b_2]_3 - [a_2]_3 = [x_2]_3 - [b_2]_3 = [0^1 1^{j+k}]_3$, thus $[a_2]_3$, $[b_2]_3$ and $[x_2]_3$ form an arithmetic sequence. 

\textbf{Case 3:} $x_2 = 1^1 0^j 1^k$, where $j,k>0$.

Let $a_2 = 0^{j+k} 2^1$ and $b_2 = 0^1 1^{j-1} 2^1 0^k$. This gives us $[b_2]_3 - [a_2]_3 = [x_2]_3 - [b_2]_3 = [0^1 1^{j+k}]_3$, thus $[a_2]_3$, $[b_2]_3$ and $[x_2]_3$ form an arithmetic sequence. 

Now we see that $[a]_3 < [b]_3$, except when $[a_2]_3 = [b_2]_3 = [x_2]_3 = 2$ and $[a_1]_3 = [b_1]_3 = [x_1]_3$. However, this indicates that $x_2 = 2$, so $x$ is in row 1, which is a trivial exception. Also, $a_1$ and $b_1$ do not contain a 2; $a_2$ and $b_2$ contain one 2 each followed by zeros, and $a_3$ and $b_3$ contain only zeros. Hence, $a$ and $b$ are in row 1. Thus the proof is finished.
\end{proof}
Here is an example of the procedure described above.
\begin{example}
\[x = 11102010220102110110011000.\]
Then $x_1 = 111020102201021101$, $x_2 = 10011$, $x_3 = 000$. 
\[a_3 = b_3 = x_3 = 000. \]
For each digit where $x_1$ has a 2, $a_1$ will have a 0, so \[a_1 = 111000100001001101.\]
For each digit where $x_2$ has a 2, $b_1$ will have a 1, so \[b_1 = 111010101101011101.\]
We would like the difference between $x_2$ and $b_2$ in base 3 to be $0111$, so \[b_2 = 01200 \] and \[[a_2]_3 = [b_2]_3 - [0111]_3 = [00002]_3.\]
Thus $a = 11100010000100110100002000$ and $b = 11101010110101110101200000$. Both are from row 1 and form an arithmetic progression with $x$ evaluated in base 3.
\end{example}

\subsection{Inductive hypothesis}
Given our base cases row 0 and row 1, we now assume that each row $i<j$ evaluated in base 3 is the same set of numbers as $R_i$. We wish to show row $j$ evaluated in base 3 is the same set of numbers as $R_j$, which is done in three lemmas below for $j = 3a$, $3a+1$, and $3a+2$. This can be done by showing that any element from a row greater than row $j$ is the last term of a 3-term arithmetic progression with two elements of row $j$ when all of them are evaluated in base 3.

\subsection{Row 3a}

\begin{lemma}\label{lemma:3a}

If for $i < 3a$ every row $i$ evaluated in base 3 is the same set of numbers as $R_i$, then row $3a$ evaluated in base 3 is the same set of numbers as $R_{3a}$.
\end{lemma}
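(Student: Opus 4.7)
The plan is to show row $3a$ satisfies the greedy $3$-free characterization of $R_{3a}$ inside $T := \mathbb{N}\setminus\bigcup_{i<3a}R_i$, which by the inductive hypothesis coincides with $\bigcup_{i\geq 3a}\text{row }i$. Row $3a$ is $3$-free by \cite{PSS}, and its minimum element equals the minimum of $T$ by Lemma~\ref{lemma:zerocolumn}. Hence it remains to establish the \emph{blocking property}: every $X$ in a row $r > 3a$, read in base $3$, is the last term of a $3$-AP whose first two terms are smaller elements of row $3a$. Given blocking together with $3$-freeness, a short induction on the base-$3$ value of $t\in T$ shows row $3a$ is exactly the greedy set $R_{3a}$.

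To establish the blocking property, take $X$ in row $r > 3a$ and write $X = \overline{Yd}$ where $d$ is the last digit. By Corollary~\ref{cor:zoomvalue}, $Y$ is the lcpHZ of the halfZ containing $X$ and so lies in a specific row $\rho$. The \emph{base case} $\rho = 2a$ occurs precisely when $X$ sits at $(3a+1, 2b)$ with $d = 2$: then the other two corners $A := \overline{Y \cdot 0}$ and $B := \overline{Y \cdot 1}$ of the same upperZ both lie in row $3a$, and $[A]_3, [B]_3, [X]_3 = 3[Y]_3,\, 3[Y]_3+1,\, 3[Y]_3+2$ is an AP of common difference $1$.

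For the \emph{main case} $\rho > 2a$, we first find $Y_A, Y_B \in$ row $2a$ with $[Y_A]_3 < [Y_B]_3 < [Y]_3$ in arithmetic progression, and then define
\[
(A,B) = \begin{cases}
\left(\overline{Y_A \cdot 0},\; \overline{Y_B \cdot 0}\right) & \text{if } d = 0,\\
\left(\overline{Y_A \cdot 1},\; \overline{Y_B \cdot 1}\right) & \text{if } d = 1,\\
\left(\overline{Y_A \cdot 0},\; \overline{Y_B \cdot 1}\right) & \text{if } d = 2.
\end{cases}
\]
Both $A$ and $B$ are upperZ corners with lcpHZ in row $2a$, so they lie in row $3a$, and a direct base-$3$ computation verifies $[A]_3, [B]_3, [X]_3$ is an AP with $[A]_3 < [B]_3 < [X]_3$.

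It remains to produce $Y_A, Y_B$. When $\rho < 3a$, the inductive hypothesis gives row $\rho = R_\rho$; applying the defining property of $R_\rho$ (with $j = 2a < \rho$) yields $Y_A, Y_B \in R_{2a} = $ row $2a$. When $\rho \geq 3a$, we recurse by running the same reduction on $Y$ against row $2a$: decompose $Y = \overline{Ze}$, find an AP for $Z$ at the next level, then lift. Each zoom-out step strictly decreases the string length and scales the relevant row index by roughly $2/3$, so the recursion terminates by either reaching a row handled by the outer IH or bottoming out at an upperZ base case. The principal technical obstacle is this recursion: the halfZ shape at row $2a$ (upperZ top, upperZ bottom, or lowerZ, depending on $2a \bmod 3$) determines which last digits are available for the intermediate pairs, so the digit-matching must be tracked level by level via a careful case analysis on the halfZ structure to ensure the constructed intermediates really land in row $2a$.
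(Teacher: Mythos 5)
Your core construction is the same as the paper's: zoom out to write $[X]_3 = 3[Y]_3 + d$ with $Y$ the lcpHZ in some row $\rho > 2a$, handle the one configuration where $Y$ itself is in row $2a$ (the other two corners of that upperZ give the progression $3[Y]_3, 3[Y]_3+1, 3[Y]_3+2$), and otherwise pull back a blocking pair $Y_A, Y_B$ from row $2a$ and append digits $(0,0)$, $(1,1)$, or $(0,1)$ according to $d = 0, 1, 2$. The digit arithmetic checks out and the appended strings do land in row $3a$, exactly as in the paper.

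The divergence, and the one genuine gap, is your handling of the sub-case $\rho \geq 3a$. There you propose a fresh recursion (``decompose $Y = \overline{Ze}$, find an AP at the next level, then lift'') and candidly flag the level-by-level digit matching as an unresolved ``principal technical obstacle.'' That recursion is both incomplete and unnecessary. The sets $R_i$ are postulated as a partition of $\mathbb{N}$ in which every element of $R_s$ is blocked by $R_j$ for \emph{every} $j < s$. Since the rows are pairwise disjoint as sets of integers and the inductive hypothesis identifies row $i$ with $R_i$ for all $i < 3a$ (in particular for all $i \leq 2a$), any $Y$ lying in a row $\rho > 2a$ is not in $R_0 \cup \cdots \cup R_{2a}$, hence lies in some $R_s$ with $s > 2a$, hence is already the last term of a $3$-term progression with two elements of $R_{2a} = {}$row $2a$ --- regardless of whether $\rho < 3a$ or $\rho \geq 3a$. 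This is exactly how the paper disposes of the case in one line, and it is what you should substitute for the recursion; with that replacement your argument is complete. (Your surrounding framing via $3$-freeness, minimality, and induction on the value of $t \in T$ is extra packaging that the paper instead defers to Lemma~\ref{lemma:RiSi}, but it is harmless.)
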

\begin{proof}
Row $3a$ consists of the elements $G(3a,2b)$ and $G(3a,2b+1)$ for $b \in \mathbb{N}_0$. From Corollary~\ref{cor:zoomvalue} we have that $[G(3a,2b)]_3 = 3[G(2a,b)]_3$ and $[G(3a,2b+1)]_3 = 3[G(2a,b)]_3+1$. 

Now consider an element $G(m,n)$ where $m>3a$. 

We start with a special case $m= 3a+1$ and $n = 2b$. Then we have $[G(m,n)]_3 = 3[G(3a,b)]_3+2$, which forms an arithmetic sequence with $[G(3a,2b)]_3$ and $[G(3a,2b+1)]_3$. 

For all other $m>3a$, we have $[G(m,n)]_3 = 3[G(p,q)]_3 + r$, where $r \in {0,1,2}$ and $p>2a$. By our induction hypothesis, row $2a$ evaluated in base 3 is the same set of numbers as $R_{2a}$, so there are two elements $[G(2a,b_1)]_3$ and $[G(2a,b_2)]_3$ that form an arithmetic progression with $[G(p,q)]_3$.

If $r = 0$, we have $[G(3a,2b_1)]_3$, $[G(3a,2b_2)]_3$, and $[G(m,n)]_3$ form an arithmetic sequence.

If $r = 1$, we have $[G(3a,2b_1+1)]_3$, $[G(3a,2b_2+1)]_3$, and $[G(m,n)]_3$ form an arithmetic sequence.

If $r = 2$, we have $[G(3a,2b_1)]_3$, $G[(3a,2b_2+1)]_3$, and $[G(m,n)]_3$ form an arithmetic sequence.

Thus, two elements from row $3a$ evaluated in base 3 form an arithmetic sequence with any $[G(m,n)]_3$ from a later row, so row $3a$ evaluated in base 3 is the same set of numbers as $R_{3a}$.
\end{proof}

Intuitively, we can think of the proof as follows. Elements from row $2a$ appended with $0$ or $1$ make up the elements of row $3a$. We know we can find an arithmetic sequence with elements of row $2a$ evaluated in base 3 and an arbitrary $[G(p,q)]_3$. The last digit $r$ of $[G(m,n)]_3 = [G(p,q)]_3 + r$ determines the last digit of our two elements from row $3a$, either in progression $(0,0,0)$, $(1,1,1)$, or $(0,1,2)$.

\subsection{Row 3a+2}
The proof for the following lemma proceeds in almost exactly identical fashion to the previous proof.

\begin{lemma}\label{lemma:3a+2}
If for $i < 3a+2$ every row $i$ evaluated in base 3 is the same set of numbers as $R_i$, then row $3a+2$ evaluated in base 3 is the same set of numbers as $R_{3a+2}$.
\end{lemma}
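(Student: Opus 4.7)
The plan is to imitate the proof of Lemma~\ref{lemma:3a} almost verbatim, making the substitutions demanded by the fact that row $3a+2$ is populated by the lowerZ rather than the upperZ. By Corollary~\ref{cor:zoomvalue}, each entry $G(3a+2,2b)$ or $G(3a+2,2b+1)$ belongs to the lowerZ with lcpHZ $G(2a+1,b)$, and these entries end in the digits $1$ and $2$ respectively, so $[G(3a+2,2b)]_3 = 3[G(2a+1,b)]_3 + 1$ and $[G(3a+2,2b+1)]_3 = 3[G(2a+1,b)]_3 + 2$. The relevant zoomed-out row is therefore $2a+1$, and the available last digits attachable to a row-$(2a+1)$ value are $1$ and $2$.

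The key step will be to pick an arbitrary $G(m,n)$ with $m>3a+2$, decompose $[G(m,n)]_3 = 3[G(p,q)]_3 + r$, where $G(p,q)$ is the lcpHZ of the halfZ containing $G(m,n)$ and $r \in \{0,1,2\}$, and then produce two row-$(3a+2)$ entries that complete a 3-term arithmetic progression ending at $[G(m,n)]_3$. A pleasant simplification compared to Lemma~\ref{lemma:3a} is that no special case arises: the lowerZ with lcpHZ in row $2a+1$ occupies only rows $3a+1$ and $3a+2$, so any halfZ that reaches a row strictly larger than $3a+2$ must have its lcpHZ in row $p \geq 2a+2 > 2a+1$. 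As a result, the inductive hypothesis applies to row $2a+1$ without exception.

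I would then use the inductive hypothesis, which says row $2a+1$ in base $3$ equals $R_{2a+1}$, to find indices $b_1<b_2$ with $[G(2a+1,b_1)]_3 < [G(2a+1,b_2)]_3 < [G(p,q)]_3$ in arithmetic progression, and conclude by a short case analysis on $r$: one chooses last digits $d_1,d_2 \in \{1,2\}$ so that $3[G(2a+1,b_1)]_3 + d_1$, $3[G(2a+1,b_2)]_3 + d_2$, and $3[G(p,q)]_3 + r$ form a 3-AP. The correct pairs $(d_1,d_2)$ are $(2,1)$ when $r=0$, $(1,1)$ when $r=1$, and $(2,2)$ when $r=2$; the common differences match on inspection, and strict ordering of the three terms follows from the strict ordering $[G(2a+1,b_1)]_3 < [G(2a+1,b_2)]_3 < [G(p,q)]_3$.

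The main obstacle worth flagging is the no-special-case observation, namely that the lowerZ rooted at $G(2a+1,b)$ never reaches row $3a+3$ or beyond; once this is in hand, the rest is a mechanical adaptation of Lemma~\ref{lemma:3a}, just with the digit pair $\{1,2\}$ in place of $\{0,1\}$ and with row $2a+1$ in place of row $2a$.
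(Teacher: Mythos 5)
Your proposal is correct and follows essentially the same route as the paper: decompose $[G(m,n)]_3 = 3[G(p,q)]_3 + r$, apply the inductive hypothesis to row $2a+1$, and attach last digits $(2,1,0)$, $(1,1,1)$, or $(2,2,2)$ according to $r=0,1,2$, exactly as in the paper's case analysis. Your explicit remark that no analogue of Lemma~\ref{lemma:3a}'s special case arises (since a lowerZ with lcpHZ in row $2a+1$ never extends past row $3a+2$) is a point the paper passes over silently, and is a welcome clarification.
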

\begin{proof}
Row $3a+2$ consists of the elements $G(3a+2,2b)$ and $G(3a+2,2b+1)$ for $b \in \mathbb{N}_0$. From Corollary~\ref{cor:zoomvalue} we have that $[G(3a+2,2b)]_3 = 3[G(2a+1,b)]_3+1$ and $[G(3a+2,2b+1)]_3 = 3[G(2a+1,b)]_3+2$. 
Now consider an element $G(m,n)$ where $m>3a+2$. We have $[G(m,n)]_3 = 3[G(p,q)]_3 + r$, where $r \in {0,1,2}$ and $p>2a$. By our induction hypothesis, row $2a+1$ evaluated in base 3 is the same set of numbers as $R_{2a+1}$, so there are two elements $[G(2a+1,b_1)]_3$ and $[G(2a+1,b_2)]_3$ that form an arithemetic sequence with $[G(p,q)]_3$. 

If $r = 0$, we have $[G(3a+2,2b_1+1)]_3$, $[G(3a+2,2b_2)]_3$, and $[G(m,n)]_3$ form an arithmetic sequence.

If $r = 1$, we have $[G(3a+2,2b_1)]_3$, $[G(3a+2,2b_2)]_3$, and $[G(m,n)]_3$ form an arithmetic sequence.

If $r = 2$, we have $[G(3a+2,2b_1+1)]_3$, $[G(3a+2,2b_2+1)]_3$, and $[G(m,n)]_3$ form an arithmetic sequence.

Thus, two elements from row $3a+2$ evaluated in base 3 form an arithmetic sequence with any $[G(m,n)]_3$ from a later row, so row $3a+2$ evaluated in base 3 is the same set of numbers as $R_{3a+2}$.
\end{proof}

Intuitively, we can think of the proof as follows. Elements from row $2a+1$ appended with $1$ or $2$ make up the elements of row $3a+2$. We know we can find an arithmetic sequence with elements of row $2a+1$ evaluated in base 3 and an arbitrary $[G(p,q)]_3$. The last digit $r$ of $[G(m,n)]_3 = [G(p,q)]_3 + r$ determines the last digit of our two elements from row $3a+2$, either in progression $(2,1,0)$, $(1,1,1)$, or $(2,2,2)$.

\subsection{Row 3a+1}

The following notation is introduced for convenience. For a given string $x_0$, we define $x_n$ to be the prefix of $x_0$, where the last $n$ digits are removed and $x_{-n}$ to be the suffix of $x_0$ consisting of the the last $n$ digits. That means, for any $n$: \[[x_0]_3 = 3^{n}[x_n]_3+[x_{-n}]_3.\]

\begin{lemma} \label{lemma:3a+1}
If for $i < 3a+1$ every row $i$ evaluated in base 3 is the same set of numbers as $R_i$, then row $3a+1$ evaluated in base 3 is the same set of numbers as $R_{3a+1}$.
\end{lemma}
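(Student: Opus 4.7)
My strategy is to mirror the proofs of Lemmas~\ref{lemma:3a} and~\ref{lemma:3a+2}: fix $G(m,n)$ with $m > 3a+1$ and exhibit two elements of row $3a+1$ that, together with $[G(m,n)]_3$, form a 3-term arithmetic progression in base~$3$. By Corollary~\ref{cor:zoomvalue}, row $3a+1$ consists of $\alpha_b := 3[G(2a,b)]_3 + 2$ (the bottom entry of an upperZ) and $\beta_b := 3[G(2a+1,b)]_3$ (the top entry of a lowerZ), so its elements are congruent to $0$ or $2$ modulo $3$ only. Writing $[G(m,n)]_3 = 3[G(p,q)]_3 + d$ where $G(p,q)$ is the lcpHZ of $G(m,n)$'s halfZ, a halfZ-position check gives $p \geq 2a+1$, with equality only when $m = 3a+2$.

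For $d = 0$ the last-digit catalog of halfZs forces $m \geq 3a+3$ and hence $p \geq 2a+2$; the inductive hypothesis applied to row $2a+1$ produces $b_1, b_2$ with $2[G(2a+1,b_2)]_3 - [G(2a+1,b_1)]_3 = [G(p,q)]_3$, and tripling yields the AP $(\beta_{b_1}, \beta_{b_2}, [G(m,n)]_3)$. For $d = 2$ the inductive hypothesis on row $2a$ applied to $[G(p,q)]_3$ (which lies in some row $> 2a$) produces the analogous AP $(\alpha_{b_1}, \alpha_{b_2}, [G(m,n)]_3)$ uniformly across both $m = 3a+2$ and $m \geq 3a+3$. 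Both constructions are direct analogues of the arguments in Lemmas~\ref{lemma:3a} and~\ref{lemma:3a+2}.

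The genuinely hard case is $d = 1$, since row $3a+1$ is missing residue class $1$ modulo $3$; the two elements $x_1, x_2$ must therefore carry different last ternary digits, forcing a mixed pairing. Solving $2x_2 - x_1 = 3[G(p,q)]_3 + 1$ with $(x_1, x_2) = (\alpha_{b_1}, \beta_{b_2})$ reduces the problem to
\[
2[G(2a+1, b_2)]_3 - [G(2a, b_1)]_3 \;=\; [G(p,q)]_3 + 1,
\]
i.e., to a \emph{mixed-row} 3-term AP ending at $[G(p,q)]_3 + 1$ whose first term sits in row $2a$ and middle term in row $2a+1$; the mirror pairing $(\beta_{b_1}, \alpha_{b_2})$ yields the analogous condition with $[G(p,q)]_3 - 1$ in place of $[G(p,q)]_3 + 1$. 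My plan is to establish the existence of one of these mixed APs as an auxiliary sub-lemma via the suffix decomposition $[x_0]_3 = 3^n[x_n]_3 + [x_{-n}]_3$ set up just before the lemma: peel trailing ternary digits off $G(p,q)$, use Lemma~\ref{lemma:minus1} to bound the row of $[G(p,q)]_3 \pm 1$ by $p+1$, and then lift a prefix-level AP produced by the inductive hypothesis back through a common suffix that cancels in the AP equation.

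The main obstacle is precisely this mixed-row AP construction for $d = 1$: the inductive hypothesis only supplies APs whose first two terms lie in the same earlier row, and no single earlier row directly furnishes a mixed AP. I anticipate the technical heart of the proof will be this auxiliary sub-lemma, proved by induction on ternary string length in the spirit of the Row~$1$ digit-substitution argument. Once the three digit cases are handled, every $[G(m,n)]_3$ from a row above $3a+1$ appears as the last term of an AP rooted in row $3a+1$; combined with the inductive hypothesis for $i < 3a+1$ and the fact that the grid rows partition $\mathbb{N}_{\geq 0}$, this identifies row $3a+1$ in base $3$ with $R_{3a+1}$.
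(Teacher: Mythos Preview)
Your decomposition by last ternary digit $d \in \{0,1,2\}$ and the handling of $d = 0, 2$ match the paper's Cases~A and~B exactly, and your mixed-row reduction for $d = 1$ is precisely the paper's Equation~\eqref{secondEQ}. What your outline underestimates is that this mixed-row condition is not dispatched by a single auxiliary sub-lemma: the paper resolves it by peeling one further digit and performing a nine-way case split on $2a \bmod 3$ and the last digit of $x_1$ (Table~\ref{table:allcases}). Two of the nine cases (``Simplest'') reduce to a same-row AP covered by the outer inductive hypothesis; two (``Peculiar'') reduce to a same-row AP ending at $[x_2]_3 - 1$, and it is here that Lemma~\ref{lemma:minus1} is actually invoked; the remaining five (``Iterative'') reproduce the mixed-row equation $[d_2]_3 - [c_2]_3 = [x_2]_3 + 1 - [d_2]_3$ itself at a strictly smaller row index, and are handled by an inner recursion on Equation~\eqref{secondEQ}, not by lifting a same-row prefix AP. Your phrase ``lift a prefix-level AP produced by the inductive hypothesis back through a common suffix'' anticipates only the Simplest/Peculiar patterns; the dominant Iterative pattern needs to be isolated explicitly so that its recursion terminates (via the decreasing row index). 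Also, your appeal to Lemma~\ref{lemma:minus1} for $[G(p,q)]_3 + 1$ is unsupported: the lemma treats only the $-1$ direction, and gives a lower bound (row $\geq p-1$), not an upper bound.
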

\begin{proof}
We want to show that any $x_0$ from row $m>3a+1$ evaluated in base 3 is the last term of a 3-term arithmetic sequence with two terms of row $3a+1$ evaluated in base 3. Call the two terms from row $3a+1$ we are looking for $c_0<d_0$. These terms yield the equation,
\begin{equation}
[d_0]_3-[c_0]_3 = [x_0]_3-[d_0]_3.
\label{someEQ}
\end{equation}

\textbf{Case A:} If $x_{-1}$ = 0, set $c_{-1} = d_{-1} = 0$. Thus $c_0$ and $d_0$ both belong to lowerZs, so $c_1$ and $d_1$ belong to the same row, and Equation~\eqref{someEQ} becomes
\[
(3[d_1]_3+0) - (3[c_1]_3+0) = (3[x_1]_3+0) - (3[d_1]_3+0)
\]
and thus
\[
[d_1]_3 - [c_1]_3 = [x_1]_3 - [d_1]_3.
\]

We know we can find $[c_1]_3$ and $[d_1]_3$ to form an arithmetic sequence with $[x_1]_3$ through our induction hypothesis, so $[c_0]_3 = [\overline{c_1 0}]_3$ and $[d_0]_3 = [\overline{d_1 0}]_3$ form an arithmetic sequence with $[x_0]_3 = [\overline{x_1 0}]_3$.

\textbf{Case B:} If $x_{-1}$ = 2, set $c_{-1} = d_{-1} = 2$. Thus $c_0$ and $d_0$ both belong to upperZs, so $c_1$ and $d_1$ belong to the same row, and Equation~\eqref{someEQ} becomes
\[
(3[d_1]_3+2) - (3[c_1]_3+2) = (3[x_1]_3+2) - (3[d_1]_3+2)
\]
and thus
\[
[d_1]_3 - [c_1]_3 = [x_1]_3 - [d_1]_3.
\]
We know we can find $[c_1]_3$ and $[d_1]_3$ to form an arithmetic sequence with $[x_1]_3$ through our induction hypothesis, so $[c_0]_3 = [\overline{c_1 2}]_3$ and $[d_0]_3 = [\overline{d_1 2}]_3$ form an arithmetic sequence with $[x_0] = [\overline{x_1 2}]_3$.

\textbf{Case C:} If $x_{-1} = 1$, set $c_{-1} = 2$ and $d_{-1} = 0$. As $c_0$ is part of an upperZ and $d_0$ is part of a lowerZ, we have that $c_1$ is from row $2a$ and $d_1$ is from row $2a+1$. Equation~\eqref{someEQ} becomes
\[
(3[d_1]_3+0) - (3[c_1]_3+2) = (3[x_1]_3+1) - (3[d_1]_3+0)
\]
and thus
\begin{equation}
[d_1]_3-[c_1]_3  = [x_1]_3 + 1-[d_1]_3.
\label{secondEQ}
\end{equation}

To reiterate, we wish to determine that there exist two values $c_1$ and $d_1$ that satisfy Equation~\eqref{secondEQ} for each $x_1$. We do this by selecting specific values for the last digit of $c_1$ and $d_1$, which we can then use to gleam additional information from Equation~\eqref{someEQ} and Equation~\eqref{secondEQ}. 

We now consider cases within Case C, depending on the remainder of $2a$ modulo 3.

\textbf{Case 1:} $2a \equiv 0 \mod{3}$. The last digit of $c_1$ must be 0 or 1, and every term is part of an upperZ. The last digit of $d_1$ must be 2 or 0. If the last digit of $d_1$ is 2, then $d_1$ is part of an upperZ, so $d_2$ and $c_2$ will be in the same row. If the last digit of $d_1$ is 0, then $d_1$ is part of a lowerZ, and $c_2$ will be from the row below $d_2$. Now we consider cases depending on the last two digits of $x_0$.

If the last digit of $x_1$ is 0, we set the last digit of $c_1$ to $0$ and the last digit of $d_1$ to 2. Substituting this into Equation~\eqref{secondEQ}, we have
\[(3[d_2]_3+2)-(3[c_2]_3+0) = (3[x_2]_3+0)+1-(3[d_2]_3+2)\]
and thus
\begin{equation}\label{peculiar}
[d_2]_3-[c_2]_3 = ([x_2]_3 - 1) - [d_2]_3.
\end{equation}
with $c_2$ and $d_2$ in the same row. By Lemma \ref{lemma:minus1}, string $([x_2]_3-1)_3$ is from a row greater than or equal to that of $c_2$ and $d_2$. If string $([x_2]_3-1)_3$ is from the same row as $c_2$, we simply take $c_2 = d_2 = x_2$, otherwise we know by induction that we can always find $c_2$ and $d_2$ in satisfying Equation~\eqref{peculiar}. We call this a \textit{Peculiar} case because the induction process is simple, but $[c_2]_3$, $[d_2]_3$ form an arithmetic sequence with $[x_2]_3-1$, rather than $[x_2]_3$. Table~\ref{table:peculiar} shows the values used.
\begin{table}[ht!]
\begin{center}
\begin{tabular}{|c|c|c|c|c|c|c|}
\hline
 $2a \mod{3}$ & $c_{-2}$ & $d_{-2}$& $x_{-2}$ & equation & row of $d_2$ & case type\\
 \hline
 0 &  02 & 20  & 01 & $[d_2]_3-[c_2]_3 = [x_2]_3 - 1 - [d_2]_3$&  same as $c_2$ & Peculiar\\
  \hline
\end{tabular}
\caption{An example of the Peculiar case.}
\label{table:peculiar}
\end{center}
\end{table}

If the last digit of $x_1$ is 1, we set the last digit of $c_1$ to $1$ and the last digit of $d_1$ to $0$. Substituting this into Equation~\eqref{secondEQ}, we have
\[(3[d_2]_3+0)-(3[c_2]_3+1) = (3[x_2]_3+1)+1-(3[d_2]_3+0)\]
and thus
\[[d_2]_3-[c_2]_3 = [x_2]_3 +1 - [d_2]_3. \]
with $c_2$ in the row below $d_2$. The identical structure to Equation~\eqref{secondEQ} is instantly noticeable. By induction, as $c_2$ is from a row below $2a$, we know we can find $c_2$ and $d_2$ in this case. We call this an \textit{Iterative} case because the explicit process of finding $c_2$ and $d_2$ requires repeated application of this caseworks. Table~\ref{table:iterative} shows the values used.
\begin{table}[ht!]
\begin{center}
\begin{tabular}{|c|c|c|c|c|c|c|}
\hline
 $2a \mod{3}$  & $c_{-2}$ & $d_{-2}$& $x_{-2}$ & equation & row of $d_2$ & case type \\
 \hline
 0 &  12 &00  & 11 & $[d_2]_3-[c_2]_3 = [x_2]_3+1 - [d_2]_3$&  above $c_2$ & Iterative\\
  \hline
\end{tabular}
\caption{An example of the Iterative case.}
\label{table:iterative}
\end{center}
\end{table}

If the last digit of $x_1$ is 2, we set the last digit of $c_1$ to $1$ and the last digit of $d_1$ to $2$. Substituting this into Equation~\eqref{secondEQ}, we have
\[(3[d_2]_3+2)-(3[c_2]_3+1) = (3[x_2]_3+2)+1-(3[d_2]_3+2)\]
and thus
\[[d_2]_3-[c_2]_3 = [x_2]_3 - [d_2]_3.\]
with $c_2$ and $d_2$ in the same row. The identical structure to Equation~\eqref{someEQ} is instantly noticeable. By induction, as $c_2$ is from a row below $2a$, we know we can find $c_2$ and $d_2$ in this case. We call this the \textit{Simplest} case simply as $[c_2]_3$, $[d_2]_3$ and $[x_2]_3$ form an arithmetic sequence.  Table~\ref{table:simplest} shows the values used.

\begin{table}[ht!]
\begin{center}
\begin{tabular}{|c|c|c|c|c|c|c|}
\hline
 $2a \mod{3}$  & $c_{-2}$ & $d_{-2}$& $x_{-2}$ & equation & row of $d_2$ & case type \\
 \hline
 0 &  12 &20  & 21 & $[d_2]_3-[c_2]_3 = [x_2]_3 - [d_2]_3$&  same as $c_2$ & Simplest\\
  \hline
\end{tabular}
\caption{An example of the Simplest case.}
\label{table:simplest}
\end{center}
\end{table}

\textbf{Cases 2 and 3:} $2a \equiv 1, 2 \mod{3}$. The induction process for these cases proceeds in a similar fashion to either the Peculiar, Iterative, or Simplest case. All results are in Table~\ref{table:allcases}.

\begin{table}[ht!]
\begin{center}
\begin{tabular}{|c|c|c|c|c|c|c|}
\hline
 $2a \mod{3}$  & $c_{-2}$ & $d_{-2}$& $x_{-2}$ & equation & row of $d_2$ & case type \\
 \hline
 0 &  02 &20  & 01 & $[d_2]_3-[c_2]_3 = [x_2]_3 - 1 - [d_2]_3$&  same as $c_2$ & Peculiar \\
 \hline
 0 &  12 &00  & 11 & $[d_2]_3-[c_2]_3 = [x_2]_3+1 - [d_2]_3$&  above $c_2$ & Iterative \\
 \hline
  0 &  12 &20  & 21 & $[d_2]_3-[c_2]_3 = [x_2]_3 - [d_2]_3$&  same as $c_2$ & Simplest \\
 \hline
  1 &  02 &20  & 01 & $[d_2]_3-[c_2]_3 = [x_2]_3 - 1 - [d_2]_3$&  same as $c_2$ & Peculiar \\
 \hline
  1 &  02 &10  & 11 & $[d_2]_3-[c_2]_3 = [x_2]_3 - [d_2]_3$&  same as $c_2$ & Simplest \\
 \hline
  1 &  22 & 10  & 21 & $[d_2]_3-[c_2]_3 = [x_2]_3+1 - [d_2]_3$&  above $c_2$ & Iterative \\
 \hline
  2 &  22 &00  & 01 & $[d_2]_3-[c_2]_3 = [x_2]_3+1 - [d_2]_3$&  above $c_2$ & Iterative \\
 \hline
   2 &  12 &00  & 11 & $[d_2]_3-[c_2]_3 = [x_2]_3+1 - [d_2]_3$&  above $c_2$ & Iterative \\
 \hline
   2 &  22 &10  & 21 & $[d_2]_3-[c_2]_3 = [x_2]_3+1 - [d_2]_3$&  above $c_2$ & Iterative \\
 \hline
\end{tabular}
\caption{All cases.}
\label{table:allcases}
\end{center}
\end{table}

We have found for any possible $x_0$ a process where $c_0$ and $d_0$ must exist. Thus, two elements from row $3a+1$ evaluated in base 3 form an arithmetic sequence with any $[x_0]_3$ from a later row, so row $3a+1$ evaluated in base 3 is the same set of numbers as $R_{3a+1}$.
\end{proof}

\subsection{Proof of Theorem~\ref{thm:interm}}

\begin{proof}[Proof of Theorem~\ref{thm:interm}]
By combining the previous three subsections, Lemma~\ref{lemma:3a}, Lemma~\ref{lemma:3a+2}, and Lemma~\ref{lemma:3a+1}, we see that if for $i<j$ every row $i$ evaluated in base 3 is the same set of numbers as $R_i$, then row $j$ evaluated in base 3 is the same set of numbers as $R_j$. This is precisely the induction hypothesis we have set out to prove; thus, every row $i$ evaluated in base 3 is the same set of numbers as $R_i$.
\end{proof}

\section{The proof of Theorems~\ref{thm:main} and \ref{thm:Scs}}\label{sec:twoproofs}

Remember that $S_i$ is the $i$-th 3-free sequence of the greedy partition of non-negative integers. Also, recall the definition of $R_i$ as it appears below Theorem~\ref{thm:interm}.

\begin{lemma} $R_i = S_i$.\label{lemma:RiSi}
\end{lemma}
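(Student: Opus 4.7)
The plan is to prove the equality by strong induction on $x\in\mathbb{N}_0$, showing that $x$ lies in the same index of the $R$-partition and of the $S$-partition. Under the inductive hypothesis $R_j\cap[0,x)=S_j\cap[0,x)$ for every $j$, let $i_R$ be the index of $x$ in $\{R_i\}$ and $i_S$ its greedy index in $\{S_i\}$. The direction $i_S\ge i_R$ is handed to us by Theorem~\ref{thm:interm}: for every $j<i_R$ there exist $a,b\in R_j\cap[0,x)=S_j\cap[0,x)$ forming an AP with $x$, so greedy cannot place $x$ in $S_j$. The reverse direction $i_S\le i_R$ uses 3-freeness of $R_{i_R}$: if instead $i_S>i_R$, then greediness forbids $x$ from joining $S_{i_R}$, producing an AP $a,b,x$ with $a,b\in S_{i_R}$; the inductive hypothesis then puts $a,b$ into $R_{i_R}$, so $\{a,b,x\}\subseteq R_{i_R}$ is a 3-term AP, contradicting 3-freeness.

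The main obstacle is this auxiliary 3-freeness claim, since so far rows have only been shown to be 3-free under base-$3/2$ evaluation, a different arithmetic condition. I would prove base-$3$ 3-freeness by a separate induction on $i$, exploiting the sub-row decomposition that falls out of Corollary~\ref{cor:zoomvalue}:
\[R_{3a}=3R_{2a}\cup(3R_{2a}+1),\quad R_{3a+1}=(3R_{2a}+2)\cup 3R_{2a+1},\quad R_{3a+2}=(3R_{2a+1}+1)\cup(3R_{2a+1}+2).\]
Given a putative AP $x_1<x_2<x_3$ inside $R_i$, each $x_k\bmod 3$ is restricted to only two possible residues (the two appearing in the decomposition of $R_i$). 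A short check of the congruence $x_1+x_3\equiv 2x_2\pmod 3$ eliminates every mixed triple of residues and forces the three terms to share a residue, so they come from a single sub-row. Dividing out the common last base-$3$ digit then turns the AP into a 3-term AP inside $R_{2a}$ or $R_{2a+1}$, contradicting the inductive hypothesis since $2a,2a+1<i$ for $i\ge 2$. The base cases are already settled: $R_0=S_0$ is the set of integers with no digit $2$ in base $3$, and Section~\ref{sec:proof} identifies $R_1$ with $S_1$; both are 3-free.

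The hardest step is verifying base-$3$ 3-freeness, precisely because the grid was constructed via base-$3/2$ arithmetic; the recursive sub-row formulas together with the residue analysis are what bridge the two bases. With 3-freeness in hand, the two-sided bound $i_R=i_S$ closes the induction on $x$, yielding $R_i=S_i$ for every $i$.
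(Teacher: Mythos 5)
Your proof is correct, and its skeleton---comparing the $R$-index and the greedy $S$-index of the first element on which they could disagree, with one direction supplied by Theorem~\ref{thm:interm} and the other by 3-freeness---is the same exchange argument the paper uses. The genuine difference is that you isolate and actually prove the fact on which the second direction hinges: that each $R_i$ (equivalently, each row of the grid read in base 3) contains no 3-term arithmetic progression. The paper's corresponding step, ``$a$ is the last term of an arithmetic progression with elements in $S_j$, which contradicts $a$ belonging to $R_j$,'' is only a contradiction if $R_j$ is 3-free in base 3; the paper never establishes this (the 3-freeness imported from \cite{PSS} is under base-$\frac{3}{2}$ evaluation, and the stated defining property of the $R_i$ does not force it by itself---taking $R_0=\mathbb{N}_0$ and all other $R_i$ empty satisfies that property vacuously). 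Your supplementary induction, via the sub-row decompositions from Corollary~\ref{cor:zoomvalue} together with the observation that $x_1+x_3\equiv 2x_2 \pmod 3$ rules out every mixed residue pattern and pushes a putative progression down into $R_{2a}$ or $R_{2a+1}$, is a correct and economical way to close this gap; the only small adjustment I would make is to source the 3-freeness of row 1 from its identification with $S_1$ in \cite{PSS} (or a direct check), since Section~\ref{sec:proof} only proves the forward progression property for that row. Net: same approach as the paper, but your version supplies a lemma the paper's proof tacitly assumes.
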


\begin{proof}
Suppose $a$ is the smallest number that belongs to $S_i$ and $R_j$, where $i \neq j$, that is, the smallest number that is out of place. 

If $i < j$, then $a$ can not be the last term of a 3-term sequence with terms in $S_i$, which contradicts a required property to belong to $R_j$. If $i > j$, then $a$ is the last term of an arithmetic progression with elements in $S_j$, which contradicts $a$ belonging to $R_j$.
\end{proof}

Now we are ready to prove Theorem~\ref{thm:main} and Theorem~\ref{thm:Scs}.

\begin{proof}[Proof of Theorem~\ref{thm:main}]
By Lemma~\ref{lemma:RiSi}, Theorem~\ref{thm:main} is equivalent to Theorem~\ref{thm:interm}.
\end{proof}

\begin{proof}[Proof of Theorem~\ref{thm:Scs}.]
By construction the first term of row $i$ is $2i$ written in base $\frac{3}{2}$.

On the other hand, by Theorem~\ref{thm:main} row $i$ contains the same set of numbers as integers in sequence $S_i$ written in base 3. By Lemma~\ref{lemma:zerocolumn}, the first term of each sequence written in base 3 is in the zeroth column. Thus the zeroth column represents the Stanley cross-sequence written in base 3.
\end{proof}

\section{Acknowledgements}

We are grateful to the MIT PRIMES program for giving us the opportunity to undergo this research.


\begin{thebibliography}{99}
\bibitem{AFS} S.~Akiyama, C.~Frougny, and J.~Sakarovitch, Powers of rationals modulo 1 and rational base number systems, {\it Israel J. Math.} \textbf{168} (2008), 53--91.

\bibitem{PSS}
M.~Borodin, H.~Han, K.~Ji, T.~Khovanova, A.~Peng, D.~Sun, I.~Tu, J.~Yang, W.~Yang, K.~Zhang, and K.~Zhao, Variants of Base 3 Over 2, 
{\it  J.~Integer Seq.} \textbf{23} Article 20.2.7, (2020).

\bibitem{FK} C.~Frougny and K.~Klouda, Rational base number systems for $p$-adic numbers, \textit{RAIRO Theor.~Inform.~Appl.} \textbf{46}, (2019), 87--106.

\bibitem{GPS} J.~Gerver, J.~Propp, and J.~Simpson, Greedily partitioning the natural numbers into sets free of arithmetic progression, {\it Proc. Amer. Math. Soc.} \textbf{102}, (1988), 765--772.

\bibitem{OS} A.~M.~Odlyzko and R.~P.~Stanley. Some curious sequences constructed with the greedy algorithm,
1978. Bell Laboratories internal memorandum.

\bibitem{OEIS} The On-Line Encyclopedia of Integer Sequences, published electronically at \url{https://oeis.org}, 2020.

\bibitem{JP} J.~Propp, How do you write one hundred in base 3/2? \url{https://mathenchant.wordpress.com/2017/09/17/how-do-you-write-one-hundred-in-base-32/}, accessed in April 2018.

\bibitem{R} A.~R\'{e}nyi, Representations for real numbers and their ergodic properties, \textit{Acta Math.~Acad.~Sci.~Hung.} \textbf{8} (1957), 477--493.

\bibitem{Roth} K.~F.~Roth, On certain sets of integers, \textit{J. London Math. Soc.} \textbf{28} (1953) 104--109. 

\bibitem{JT} J.~Tanton, {\it Companion Guide to Exploding dots}, James Tanton, 2013.

\end{thebibliography}
\end{document}